\theoremstyle{plain} %default
\newtheorem{thm}{Theorem}[section]
\newtheorem{lem}[thm]{Lemma}
\newtheorem{prop}[thm]{Proposition}
\theoremstyle{definition}
\newtheorem{defn}{Definition}[section]
\newtheorem{conj}{Conjecture}[section]
\theoremstyle{remark}
\begin{document}

\title{On minimal higher genus fillings}

\author{Gregory R. Chambers}
\address{Department of Mathematics, Rice University, Houston, TX}
\email{gchambers@rice.edu}
\date{\today}
\begin{abstract}
	In this article, we prove that if $(M,g)$ is a genus $G$ orientable surface with a single boundary component
	$S^1$, and if $(D,g_0)$ is a disc such that interior points are connected by unique geodesics and
	$$d_{(D,g_0)}(x,y) \geq d_{(M,g)}(x,y)$$
	for all $x,y \in \partial M = \partial D$, then $$(1 + \frac{2 G}{\pi})  \textrm{Area}(M,g) \geq \textrm{Area}(D,g_0).$$

\end{abstract}
\maketitle

%%%%%%%%%%%%%%%%%%%%%%%%%%%%%%%%%%%%%%%%%%%%%%%%%%%%%%%%%%%%
\section{Introduction}
%%%%%%%%%%%%%%%%%%%%%%%%%%%%%%%%%%%%%%%%%%%%%%%%%%%%%%%%%%%%

In ``Filling Riemannian manifolds" \cite{G1}, M. Gromov conjectured the following:
\begin{conj}[Gromov]
	\label{conj:Gromov}
	Suppose that $(M,g)$ is a Riemannian orientable surface with a single boundary
	component $S^1$ of length $2\pi$.  If, for every $x,y \in S^1$
	$$ d_{S^1} (x,y) = d_{(M,g)}(x,y),$$
	then the area of $(M,g)$ is no less than the area of the round hemisphere
	of intrinsic diameter $\pi$.
\end{conj}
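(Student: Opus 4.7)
My strategy is a Santaló-type filling inequality: use the unique-geodesic structure of $(D,g_0)$ to express $\textrm{Area}(D,g_0)$ as an integral of boundary chord lengths via Santaló's formula, convert it to an integral of $(M,g)$-distances using the boundary distance hypothesis, and bound that integral above by $\textrm{Area}(M,g)$ times a genus-dependent constant via a cut-into-disc argument. The aim is to chain the two inequalities
\[
\pi \cdot \textrm{Area}(D,g_0) \;\overset{(a)}{\leq}\; \int_{\partial M \times \partial M} d_{(M,g)}(p,q)\, d\mu(p,q) \;\overset{(b)}{\leq}\; (\pi + 2G)\cdot \textrm{Area}(M,g),
\]
after which dividing by $\pi$ yields the conclusion.

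\textbf{Step 1: Santaló on $D$.} Because $(D,g_0)$ has unique geodesics, each pair of distinct boundary points $p,q\in\partial D$ is joined by a unique chord of length $L_D(p,q) = d_{(D,g_0)}(p,q)$, and Santaló's two-dimensional formula reads
\[
\pi \cdot \textrm{Area}(D,g_0) \;=\; \int_{\partial D \times \partial D} L_D(p,q)\, d\mu(p,q), \quad d\mu \;=\; \frac{\cos\theta_p \cos\theta_q}{L_D}\, ds_p\, ds_q,
\]
with $\theta_p,\theta_q$ the incidence angles of the chord at the boundary. The boundary distance hypothesis is then invoked to substitute $d_{(M,g)}(p,q)$ for $L_D(p,q)$ pair-by-pair, producing inequality $(a)$.

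\textbf{Step 2: Genus-corrected bound on the $M$-integral.} The heart of the argument is inequality $(b)$. I plan to prove it by cutting $M$ along $2G$ minimal geodesic arcs $\beta_1,\ldots,\beta_{2G}$ with endpoints on $\partial M$ whose classes form a basis of $H_1(M,\partial M;\mathbb{Z}/2)$, producing a topological disc $M^*$ with $\textrm{Area}(M^*) = \textrm{Area}(M,g)$. A Santaló-type estimate applied on $M^*$ accounts for the $\pi \cdot \textrm{Area}(M)$ term, while minimal $M$-paths that cross the $2G$ cuts contribute the additional $2G\cdot \textrm{Area}(M)$ defect. The combinatorics of these crossings is controlled by Klein's Lemma, which asserts that two chords in a disc with unique geodesics intersect in at most one point and therefore bounds how many distinct minimal $M$-paths can pass through a given direction at a given interior point.

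\textbf{Main obstacle.} Step 2 is by far the hardest part. The delicate issue is to extract exactly the constant $2G$, rather than some larger function of the genus, from the multiplicity count of minimal $M$-paths crossing the cuts, while simultaneously matching the $D$-derived Santaló measure $d\mu$ against the cut-and-paste bookkeeping on the disc $M^*$. Klein's Lemma is the essential geometric tool for forcing the multiplicity correction to take the sharp linear-in-$G$ form; the bulk of the paper's work should lie here.
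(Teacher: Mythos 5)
The statement you were asked to prove is Gromov's filling area conjecture, and the first thing to say is that this paper does not prove it: it is stated purely as motivation, is known only for genus $0$ (via Pu's inequality, or Ivanov's Theorem~1.2) and genus $1$ (Bangert--Croke--Ivanov--Katz), and remains open for $G \geq 2$. There is therefore no proof in the paper to compare against. More importantly, your proposal does not actually target the conjecture: the chain of inequalities you set out to prove is exactly the paper's main theorem, $\textrm{Area}(D,g_0) \leq (1 + \frac{2G}{\pi})\,\textrm{Area}(M,g)$. Even if every step were carried out, this does not imply the conjecture. Taking $(D,g_0)$ to be the round open hemisphere (which is geodesically convex with unique geodesics between interior points, and whose boundary distance function on the equator coincides with $d_{S^1}$), the inequality yields only $\textrm{Area}(M,g) \geq 2\pi^2/(\pi + 2G)$, which is strictly weaker than the conjectured lower bound $2\pi$ for every $G \geq 1$. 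So as a proof of the stated conjecture the proposal fails at the final step for all positive genus, and no refinement of your Step 2 can repair this; the conjecture demands a genus-independent sharp constant, which is precisely what makes it hard.

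Independently, Step 2 has a genuine gap even as a route to the weaker genus-dependent inequality. The measure $d\mu = \frac{\cos\theta_p\cos\theta_q}{L_D}\,ds_p\,ds_q$ in your Santal\'o formula is built entirely from the geometry of $(D,g_0)$ (chord lengths and incidence angles there), so after substituting $d_{(M,g)}(p,q)$ for $L_D(p,q)$ you are integrating $M$-distances against a $D$-measure; there is no Santal\'o identity on the cut-open surface $M^*$ that sees this foreign measure, and the crossing-multiplicity bookkeeping you describe has nothing to attach to. (I would also note that ``Klein's Lemma'' is not an established tool in this setting.) The paper proceeds quite differently, by a calibration argument in the style of Ivanov: one introduces distance-like functions $f_p$ on $M$ and $\tilde f_p$ on $D$ that agree on the common boundary, applies Stokes' theorem to the exact form $\sum_i df_{p_i}\wedge df_{p_{i+1}}$ to equate the two bulk integrals, and then bounds the resulting density pointwise. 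The genus enters through a bouquet of $2G$ loops based at each interior point, cutting $M$ into an annulus, and a combinatorial count of oriented triangle areas in the unit tangent circle; the excess over $\pi$ is exactly $4G$ extra terms each of size at most $\tfrac12$, which is where the constant $2G$ comes from.
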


Gromov also observed that if $M$ has genus $0$, this result can be obtained from Pu's inequality~\cite{Pu1},
which is a sharp inequality relating the length of the systole (shortest noncontractible loop) to the area of $\mathbb{R} P^2$
with a Riemannian metric.

In 2002, S. Ivanov~\cite{S1} gave a different proof of this result, in fact proving something more general:
\begin{thm}[S. Ivanov]
	\label{thm:ivanov}
	If $(D,g)$ and $(D,g_0)$ are two Riemannian discs with $$d_g(x,y) \geq d_{g_0}(x,y)$$ for all $x,y \in \partial D$,
	and if interior points of $(D,g_0)$ are connected by unique geodesics, then
	$$ \textrm{Area}(D,g) \geq \textrm{Area}(D,g_0).$$
\end{thm}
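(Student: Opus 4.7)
The plan is to construct a $1$-Lipschitz map $F : (D,g) \to (D,g_0)$ that restricts to the identity on $\partial D$, and then to deduce the area inequality from the area formula. Since $F$ is $1$-Lipschitz, $|\mathrm{Jac}\,F| \le 1$ almost everywhere on $D$; and since $F|_{\partial D} = \mathrm{id}$ forces $F$ to have topological degree one, $\#F^{-1}(q) \ge 1$ for almost every $q \in D$. The area formula then yields
\[
\textrm{Area}(D,g_0) \le \int_D \#F^{-1}(q)\, dV_{g_0}(q) = \int_D |\mathrm{Jac}\,F|\, dV_g \le \textrm{Area}(D,g),
\]
so the entire content of the proof lies in constructing $F$.

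To build $F$, I would work through the boundary distance representation. For each $p \in D$, set $h_p(x) := d_g(p,x)$ for $x \in \partial D$; this profile is $1$-Lipschitz with respect to $d_g|_{\partial D}$, and by the boundary hypothesis $d_{g_0} \le d_g$ also with respect to $d_{g_0}|_{\partial D}$. The unique-geodesics assumption guarantees that the map $\iota : (D, g_0) \to C(\partial D)$, $\iota(q) = d_{g_0}(q, \cdot)$, is injective with a well-behaved image $\Sigma_0$, so that a point of $(D,g_0)$ is determined by its boundary distance profile. I would then define $F(p)$ to be $\iota^{-1}(\phi_p)$, where $\phi_p \in \Sigma_0$ is a canonical profile dominated by $h_p$ pointwise --- for instance the $\Sigma_0$-element maximizing a natural functional subject to $\phi_p \le h_p$, selected so that the selection rule is compatible with the geodesic structure of $(D, g_0)$. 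On $\partial D$ we have $h_x = d_g(x, \cdot) \ge d_{g_0}(x,\cdot) = \iota(x)$, so the natural choice produces $\phi_x = \iota(x)$ and $F(x) = x$.

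The main obstacle is verifying that $F$ is $1$-Lipschitz. The trivial bound $\|h_p - h_{p'}\|_\infty \le d_g(p,p')$ is easy, but converting this $L^\infty$ estimate into a bound on $d_{g_0}(F(p), F(p'))$ requires a quantitative inverse for $\iota$ together with a stability analysis for the selection rule defining $\phi_p$. This is the technical heart of the argument, and precisely where the unique-geodesics hypothesis is decisive: it endows $(D,g_0)$ with enough convexity that $L^\infty$-small perturbations of a boundary distance profile correspond to $g_0$-small displacements of the underlying point. An appealing alternative formulation is to embed both discs isometrically into $L^\infty(\partial D)$ via $p \mapsto d_g(p,\cdot)$ and $q \mapsto d_{g_0}(q,\cdot)$ and realize $F$ as the restriction of a canonical $1$-Lipschitz retraction of $L^\infty(\partial D)$ onto the image of $(D,g_0)$; the unique-geodesics hypothesis then appears as exactly the condition making such a retraction available. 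In either formulation, once the Lipschitz estimate is secured, the degree-one property and the area formula displayed above close the argument.
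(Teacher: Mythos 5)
Your reduction of the theorem to the existence of a $1$-Lipschitz map $F:(D,g)\to(D,g_0)$ with $F|_{\partial D}=\mathrm{id}$ is sound as far as it goes (degree one together with the area formula does yield the inequality), but the entire content of the theorem is concentrated in the step you leave open, and that step does not go through as described. The unique-geodesics hypothesis does give that $\iota(q)=d_{g_0}(q,\cdot)$ is an isometric embedding of $(D,g_0)$ into $L^\infty(\partial D)$ (extend the geodesic from $q'$ through $q$ to the boundary to see the sup is attained), and $p\mapsto d_g(p,\cdot)$ is $1$-Lipschitz into $L^\infty(\partial D)$. But to finish you need a $1$-Lipschitz map from $\{d_g(p,\cdot):p\in D\}$ to $\Sigma_0=\iota(D)$ sending $d_g(x,\cdot)$ to $\iota(x)$ for $x\in\partial D$. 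A $1$-Lipschitz retraction of all of $L^\infty(\partial D)$ onto $\Sigma_0$ exists only when $\Sigma_0$ is an injective (hyperconvex) metric space, and a uniquely geodesic Riemannian disc need not be injective --- unique geodesics is far weaker than hyperconvexity, so your ``canonical retraction'' is not available. Restricting attention to the profiles $\{d_g(p,\cdot)\}$ does not help: a $1$-Lipschitz map from that set to $\Sigma_0$ \emph{is} the map $F$ you are trying to construct, so the argument is circular. Likewise the ``maximal profile dominated by $h_p$'' is not well defined (the dominated subset of $\Sigma_0$ has no canonical maximum), and no reason is given why any selection of it would be $1$-Lipschitz, or even continuous. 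In short, the proposal defers exactly the point where a proof is required, and that point is the hard (and, in this generality, unavailable) one.

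This is also why the argument in the present paper (following Ivanov) looks so different: it never constructs a map between the two surfaces. Instead it introduces the functions $f_p(x)=\max_{q\in\partial M}(|pq|_0-|xq|)$ and $\tilde f_p$, shows they agree on the common boundary (Lemma~\ref*{lem:f_agreement}), and compares $\int_M\sum_i df_{p_i}\wedge df_{p_{i+1}}$ with $\int_D\sum_i d\tilde f_{p_i}\wedge d\tilde f_{p_{i+1}}$ via Stokes' theorem; the pointwise upper bound on the $M$-side comes from the cyclic ordering of nearest points of maximum (Lemmas~\ref*{lem:order_of_points} and~\ref*{lem:separate}), while the $D$-side integrand converges to $2\pi\,dA$ as the boundary points become dense (Lemmas~\ref*{lem:area_sequence_specific} and~\ref*{lem:pullback_D}). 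If you wish to salvage your approach you would have to establish the Lipschitz extension/retraction property for uniquely geodesic discs, which is a substantially stronger claim than the theorem itself and is precisely what the calibration-style argument is designed to avoid.
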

Conjecture~\ref*{conj:Gromov} is also known to be true if $(M,g)$ is a genus $1$ surface; this was proved using different methods 
in 2003 by V. Bangert, C. Croke, S. Ivanov, and M. Katz (see \cite{S2}).

Our main result generalizes Theorem~\ref*{thm:ivanov}:
\begin{thm}
	\label{thm:main}
	Suppose that $(D,g_0)$ is a Riemannian $2$-disc $D$, and that
	$(M,g)$ is an oriented surface with boundary $\partial M = \partial D = S^1$ and genus $G$. 
	If 
	\begin{enumerate}
		\item	For every two points $x$ and $y$ in the interior of $D$, there is a unique geodesic
			from $x$ to $y$.
		\item	For every $p$ and $q$ on $\partial D = \partial M$,
			$$d_{g_0}(p,q) \leq d_g(p,q).$$
	\end{enumerate}
	then $$Area(D,g_0) \leq ( 1 + \frac{2 G}{\pi}) Area(M,g).$$
\end{thm}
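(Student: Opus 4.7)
The plan is to reduce to Ivanov's theorem (Theorem~\ref{thm:ivanov}) by cutting $(M, g)$ along $2G$ carefully chosen arcs to produce a topological disc, and then comparing with a modification of $(D, g_0)$. First I would choose $2G$ disjoint simple arcs $\alpha_1, \ldots, \alpha_{2G}$ on $M$ with endpoints on $\partial M$, each a length-minimizer in its relative homotopy class, so that cutting $M$ along these arcs produces a topological disc $\widetilde M$. Such a system exists because $\chi(M) = 1 - 2G$ and cutting along a simple arc with endpoints on $\partial M$ increases the Euler characteristic by $1$. Each $\alpha_i$ is then a geodesic segment of some length $\ell_i$, $(\widetilde M, g)$ is a Riemannian disc with $\text{Area}(\widetilde M) = \text{Area}(M)$, and $\partial \widetilde M$ is a metric circle alternating arcs of $\partial M$ with the paired copies $\alpha_i^{\pm}$.

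Next, I would construct a Riemannian comparison disc $(D^*, g_0^*)$ whose boundary matches $\partial \widetilde M$ as a metric circle, with unique geodesics between interior points and with boundary distances dominated by those of $\widetilde M$. The natural candidate modifies $(D, g_0)$ near $2G$ disjoint chords (one per pair $a_i, b_i$ of endpoints of $\alpha_i$) by excising a thin ``lens'' region around each chord and regluing with two flat arcs of length $\ell_i$, so that the new arcs $\alpha_i^{\pm}$ appear on $\partial D^*$. On the arcs of $\partial M$ in $\partial D^*$, the boundary distance inequality $d_{g_0^*} \le d_g$ follows from hypothesis~(2); on the new flat arcs it is immediate by construction. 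Applying Theorem~\ref{thm:ivanov} to $\widetilde M$ and $D^*$ then yields
\[
\text{Area}(M) \;=\; \text{Area}(\widetilde M) \;\ge\; \text{Area}(D^*).
\]

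The proof would conclude with the area bookkeeping
\[
\text{Area}(D) - \text{Area}(D^*) \;\le\; \tfrac{2G}{\pi}\, \text{Area}(M),
\]
which, combined with the Ivanov bound, gives the theorem. The factor $\tfrac{2}{\pi}$ per handle is strongly suggestive of a Euclidean half-disc isoperimetric estimate: the area enclosed between an arc of length $\ell$ and its chord is at most $\ell^2/(2\pi)$, attained by the semicircle. Summed over the $2G$ excised lenses, this reduces matters to a bound of the form $\sum_i \ell_i^2 \le 4G\, \text{Area}(M)$. \textbf{Main obstacle:} this final estimate on the optimally chosen arcs $\alpha_i$ is the delicate step; it is not immediate and presumably requires a global geometric argument tying the lengths of the minimizing cut arcs to the areas of the handles of $M$ they traverse. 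A secondary issue is ensuring that the surgical construction of $(D^*, g_0^*)$ genuinely satisfies the unique-geodesics hypothesis of Theorem~\ref{thm:ivanov}, since the excision-and-attach procedure may introduce corners or non-convexities that must be smoothed without spoiling the area accounting.
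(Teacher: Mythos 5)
Your reduction-to-Ivanov strategy is genuinely different from the paper's argument, but it has a real gap at exactly the point you flag, and that gap is fatal as stated: the inequality $\sum_i \ell_i^2 \le 4G\,\mathrm{Area}(M,g)$ for a minimal cutting system of arcs is false in general. Take $G=1$ and let $M$ be the flat torus $\RR^2/(L\mathbb{Z}\oplus\epsilon\mathbb{Z})$ with a small disc removed, with $L \gg \epsilon$. Any system of two arcs cutting $M$ to a disc must contain an arc wrapping the long direction, of length at least roughly $L$, while $4G\,\mathrm{Area}(M,g) \approx 4L\epsilon$; so $\sum_i \ell_i^2 \gtrsim L^2 \gg 4L\epsilon$. (In this example the theorem's conclusion is not actually threatened, because the hypotheses then force $\mathrm{Area}(D,g_0)$ to be small, but that rescue is not part of your argument: your intermediate inequality is simply not true, so the bookkeeping $\mathrm{Area}(D)-\mathrm{Area}(D^*) \le \frac{2G}{\pi}\mathrm{Area}(M)$ cannot be established this way.) A second, independent problem is the construction of $(D^*,g_0^*)$: for Theorem~\ref{thm:ivanov} you need $d_{g_0^*}(x,y)\le d_{\widetilde M}(x,y)$ for \emph{all} pairs of boundary points, including a point on a new arc $\alpha_i^{\pm}$ against a point of the original $\partial M$, and including the two copies $\alpha_i^{+},\alpha_i^{-}$ against each other; cutting increases distances in $\widetilde M$ in an uncontrolled way, and nothing in the lens surgery guarantees these inequalities while keeping the excised area small and preserving uniqueness of geodesics.

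The paper takes a different route that sidesteps both issues: it does not invoke Ivanov's theorem as a black box but reruns his proof. One defines the distance-like functions $f_p(x)=\max_q(|pq|_0-|xq|)$ on $M$ and $\tilde f_p$ on $D$, checks they agree on the common boundary, and compares $\int F^*\omega_n$ over $M$ and $D$ via Stokes. The genus enters only pointwise: at an interior point $x$ one cuts $M$ along a \emph{bouquet} of $2G$ geodesic loops based at $x$ (chosen so that minimizing geodesics from $x$ to $\partial M$ miss them), which turns $M$ into an annulus and partitions $UT_xM$ into $4G$ sectors. Within each sector the gradients $v_i=\mathrm{grad}\,f_{p_i}(x)$ are cyclically ordered, contributing total oriented triangle area at most $\pi$, and the $4G$ sector crossings each contribute at most $1/2$ by the elementary bound $|\mathrm{Area}(\Delta_{ab})|\le 1/2$; this yields the density bound $\pi(1+\frac{2G}{\pi})$ and hence the theorem, with no global length-versus-area estimate ever needed.
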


The problem of comparing the geometry of manifolds based on their boundary data has been studied in other contexts as well.  For example,
see \cite{BS1}, \cite{C1}, \cite{C2}, \cite{C3}, and \cite{G1L}.

\vspace{2mm}

\noindent {\bf Acknowledgments}
The author would like to thank Michael Wolf and for many useful conversations surrounding the ideas
in this article, as well for making numerous suggestions to earlier drafts.  He would also like to
thank Yevgeny Liokumovich for several discussions at the beginning of this project.  The author
was supported in part by NSF grant DMS-1906543.

%%%%%%%%%%%%%%%%%%%%%%%%%%%%%%%%%%%%%%%%%%%%%%%%%%%%%%%%%%%%%%%%%%%
\section{Overview of proof}
%%%%%%%%%%%%%%%%%%%%%%%%%%%%%%%%%%%%%%%%%%%%%%%%%%%%%%%%%%%%%%%%%%%

As mentioned in the introduction, the proof will build upon the methods developed by S. Ivanov in \cite{S1}.
Following this article, we begin by defining ``special distance functions" as follows.
Selecting a point $p$ on the boundary of $M$ (which is equal to the boundary of $D$), we define the function
$ f_p : M \rightarrow \mathbb{R} $
by $$ f_p(x) = \max_{q \in \partial M}( |pq|_0 - |xq| ).$$
Here, we are using the same notation as in \cite{S1}; $|\cdot|_0$ denotes the distance between points in $(D,g_0)$,
and $|\cdot|$ denotes the distance between points in $(M,g)$.  Note that the function $f_p$ is well-defined since $\partial M$
is compact.  Similarly, we define $\tilde{f}_p : D \rightarrow \mathbb{R}$ by
$$ \tilde{f}_p(x) = \max_{q \in \partial D} (|pq|_0 - |xq|_0).$$

The proof now works as follows.  We argue that, for a point $p \in \partial M = \partial D$, $f_p$ and $\tilde{f}_p$ agree on the boundary.
We then choose a sequence of distinct points $p_1, \dots, p_n$ on $\partial M = \partial D$, and consider the $2$-forms $\sum_{i=1}^n f_{p_i} \wedge f_{p_{i+1}}$
on $M$ and $\sum_{i=1}^n \tilde{f}_{p_1} \wedge \tilde{f}_{p_{i+1}}$ on $D$.  We show that both are exact, and so by Stokes' Theorem
their integrals can be written as integrals of a $1$-form over $\partial M$ and $\partial D$.  Since these boundaries agree and the functions agree on them
(for all $p_1, \dots, p_n$), these forms also are equal on the boundaries and so the integrals are equal as well.

We then argue that, as this sequence of points becomes dense in the boundary, the integral over the disc goes to $\pi \textrm{Area}(D,g_0)$, and
the integral over the genus $1$ surface is bounded by $\pi (1 + \frac{2 G}{\pi}) \textrm{Area}(M,g)$.  Since they agree (for any sequence of points) up to an arbitrarily small perturbation, we obtain the desired result.

%%%%%%%%%%%%%%%%%%%%%%%%%%%%%%%%%%%%%%%%%%%%%%%%%%%%%%%%%%%%%%%%
\section{Properties of $f_p$ and $\tilde{f}_p$}
%%%%%%%%%%%%%%%%%%%%%%%%%%%%%%%%%%%%%%%%%%%%%%%%%%%%%%%%%%%%%%%%

Throughout this section, the majority of the statements and their proofs follow their counterparts in \cite{S1}.
We begin with the following lemma:
\begin{lem}
	\label{lem:f_p_properties}
	Suppose that $p \in \partial M$, and $f_p$ is defined as above.  Then the following are true:
	\begin{enumerate}
		\item	$f_p$ is a nonexpanding function with respect to the metric $g$, that is, $|f_p(x) - f_p(y)| \leq |xy|$
			for all $x,y \in M$.
		\item $f_p$ is differentiable almost            everywhere.
	    \item	If $x \in \partial M$, then $f_p(x) = |px|_0$.
	\end{enumerate}
\end{lem}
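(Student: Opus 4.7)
The plan is to treat the three items in order, since each uses slightly more information than the last.

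For (1), I would use the standard argument for maxima of families of $1$-Lipschitz functions. Fix $x,y \in M$ and let $q^* \in \partial M$ achieve the maximum in the definition of $f_p(x)$, so $f_p(x) = |pq^*|_0 - |xq^*|$. Then
\[
f_p(x) - f_p(y) \leq \bigl(|pq^*|_0 - |xq^*|\bigr) - \bigl(|pq^*|_0 - |yq^*|\bigr) = |yq^*| - |xq^*| \leq |xy|,
\]
where the last step is the triangle inequality in $(M,g)$. Swapping $x$ and $y$ gives $|f_p(x)-f_p(y)| \leq |xy|$.

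For (2), I would simply cite Rademacher's theorem: since $f_p$ is $1$-Lipschitz on $M$ by (1), it is differentiable almost everywhere in the Lebesgue sense (with respect to any smooth atlas of $M$, equivalently with respect to the measure induced by $g$).

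For (3), this is where the hypothesis from Theorem \ref{thm:main} relating $g_0$ and $g$ on $\partial M = \partial D$ is used. Pick $x \in \partial M$. The lower bound $f_p(x) \geq |px|_0$ comes from choosing $q = x$ in the maximum, which gives $|px|_0 - |xx| = |px|_0$. For the upper bound I would show $|pq|_0 - |xq| \leq |px|_0$ for every $q \in \partial M$. By the triangle inequality in $(D,g_0)$,
\[
|pq|_0 \leq |px|_0 + |xq|_0,
\]
and the hypothesis $d_{g_0}(x,q) \leq d_g(x,q)$ for $x,q \in \partial D$ gives $|xq|_0 \leq |xq|$. Combining these yields $|pq|_0 - |xq| \leq |px|_0$, and taking the maximum over $q$ finishes the bound.

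The only real content is in (3), where the key observation is that the comparison of boundary distances between $g$ and $g_0$ is exactly what is needed to make the triangle inequality in $(D,g_0)$ pass through a quantity defined by distances in $(M,g)$. Parts (1) and (2) are formal consequences of the definition of $f_p$ as a supremum of $1$-Lipschitz functions and do not use any hypothesis beyond compactness of $\partial M$.
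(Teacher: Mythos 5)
Your proof is correct and follows essentially the same route as the paper: part (1) via the fact that a supremum of $1$-Lipschitz functions is $1$-Lipschitz (you use the maximizer $q^*$ directly, the paper notes each $f_{p,q}$ is nonexpanding — same idea), part (2) by Rademacher, and part (3) by combining the triangle inequality in $(D,g_0)$ with the boundary-distance hypothesis $|xq|_0 \leq |xq|$ and testing $q = x$ for the lower bound. No gaps.
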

\begin{proof}
	We will prove the three parts of the lemma in order:
	\begin{enumerate}
		\item	Fix $p \in \partial M$; $f_p$ is the supremum of functions $f_{p,q}(x) = - |xq| + |pq|_0$ over all $q \in \partial M$.
			We observe that $$|f_{p,q}(x) - f_{p,q}(y)| = | |xq| - |yq| | \leq |xy|,$$ where the last inequality
			is due to the reverse triangle inequality.  Thus, each $f_{p,q}$ is nonexpanding, and so the supremum
			over $q \in \partial M$ is also nonexpanding.
		\item	If $x \in \partial M$, then $|xq| \geq |xq|_0$ for all $q \in \partial M$ from the assumption about the boundary
			distances in $(D,g_0)$ and $(M,g)$.  Then
			$|pq|_0 - |xq| \leq |pq|_0 - |xq|_0 \leq |px|_0$ from the triangle inequality.  Furthermore, choosing $q = x$ we have
			$|px|_0 - |xx| = |px|_0$, completing the proof.
		\item This follows from the fact that $f_p$ is nonexpanding, and so is Lipschitz (with Lipschitz constant $1$).
	\end{enumerate}
\end{proof}

We also have the following lemma:
\begin{lem}
	\label{lem:f_g_disc_properties}
	Suppose that we fix a point $p \in \partial D$ (which is also equal to $\partial M$).  Defining $\tilde{f}_p$ as above, we have
	$\tilde{f}_p(x) = |xp|_0$ for all $x \in D$.
\end{lem}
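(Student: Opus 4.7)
The plan is to prove the two inequalities $\tilde{f}_p(x)\le |xp|_0$ and $\tilde{f}_p(x)\ge |xp|_0$ separately. The first is immediate from the definition: for every $q\in\partial D$, the reverse triangle inequality in $(D,g_0)$ gives
\[
 |pq|_0 - |xq|_0 \le |xp|_0,
\]
so taking the supremum over $q\in\partial D$ yields $\tilde{f}_p(x)\le |xp|_0$. The real content is therefore the reverse inequality, which amounts to producing a specific boundary point $q^{*}\in\partial D$ for which
\[
 |pq^{*}|_0 = |xp|_0 + |xq^{*}|_0,
\]
i.e.\ a point $q^{*}$ such that $x$ lies on a minimizing $g_0$-geodesic from $p$ to $q^{*}$.

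When $x\in\partial D$, the choice $q=x$ already realizes the supremum, so I can assume $x$ is interior. I would then let $\gamma:[0,|px|_0]\to D$ be a minimizing geodesic from $p$ to $x$ and extend it past $x$ as a geodesic in $(D,g_0)$, continuing until it first meets $\partial D$ at a point $q^{*}$. Call the extended curve $\tilde\gamma$; its total length is $|px|_0 + |xq^{*}|_0$. The goal is to show that $\tilde\gamma$ is a minimizing path from $p$ to $q^{*}$, which gives exactly the equality above.

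To exploit the hypothesis that interior points of $(D,g_0)$ are joined by unique geodesics, I would use a limiting argument: choose a sequence $p_n$ of interior points converging to $p$ along $\gamma$ (so $p_n$ lies on the minimizing segment from $p$ to $x$). For each $n$, both $p_n$ and a point $y_n$ slightly before $q^{*}$ on $\tilde\gamma$ are interior, so the unique geodesic from $p_n$ to $y_n$ must coincide with the corresponding sub-arc of $\tilde\gamma$ and is in particular the minimizing connection. Passing to the limit $n\to\infty$ and $y_n\to q^{*}$, the sub-arc from $p_n$ to $y_n$ converges in length to $|px|_0+|xq^{*}|_0$, while $|p_n y_n|_0\to |pq^{*}|_0$ by continuity of the distance function. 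This yields $|pq^{*}|_0=|px|_0+|xq^{*}|_0$, as required.

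The step I expect to be the main obstacle is the minimization along the extension: a priori, extending a minimizing geodesic past an interior point need not remain minimizing. The whole point of the unique-geodesics hypothesis is to rule out this failure, but it applies only to pairs of interior points, so the argument has to be set up as a limit rather than invoked directly at the boundary endpoints $p$ and $q^{*}$. One has to verify that the extension $\tilde\gamma$ really does exit $D$ in finite time (which follows because it cannot accumulate in the interior without violating uniqueness), and that the limiting lengths behave well, but once these technical points are in hand the lemma drops out.
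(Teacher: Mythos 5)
Your proposal is correct and follows essentially the same route as the paper: the upper bound comes from the (non-expansion/triangle) inequality, and the lower bound comes from extending the minimizing geodesic from $p$ through $x$ until it exits at a boundary point $q^{*}$ and using the unique-geodesics hypothesis to conclude that the extension is minimizing, giving $|pq^{*}|_0 = |px|_0 + |xq^{*}|_0$. The paper states the minimality of the extension in one line ("since this geodesic is unique, it is minimal"); your limiting argument via interior points $p_n$ and $y_n$ just supplies the detail the paper leaves implicit.
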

\begin{proof}
	Fix $x$ in the interior of $D$, and let $q \in \partial D$ be the unique point on $\partial D$ where the geodesic from $p$ to $x$ intersects $\partial D$.
	Note that $q$ is unique by the assumption on the uniqueness of geodesics connecting interior points in $(D,g_0)$.

	Since this geodesic is unique, it is minimal, and $|pq|_0 = |px|_0 + |xq|_0$, so $\tilde{f}_p(x) \geq |pq|_0 - |qx|_0 = |px|_0$.  In addition,
	$\tilde{f}_p(x) \leq \tilde{f}_p(p) + |px|_0 = |px|_0$ from Part 1 of Lemma~\ref*{lem:f_p_properties} (note that $(D,g_0)$ satisfies the hypotheses of that lemma).
\end{proof}

\begin{lem}
	\label{lem:f_agreement}
	Suppose that $p \in \partial M = \partial D$, and $f_p$ and $\tilde{f}_p$ are defined as above.  Then $f_p(x) = \tilde{f}_p(x)$
	for every $x \in \partial D = \partial M$.
\end{lem}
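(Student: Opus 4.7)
The plan is to recognize that this lemma is essentially a corollary of the two previous lemmas, and that no new geometric argument is required. I would observe that both $f_p$ and $\tilde f_p$ have already been computed explicitly on $\partial M = \partial D$.

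Concretely, the first step is to invoke Lemma~\ref{lem:f_p_properties}, whose third part asserts that for any $x \in \partial M$ one has $f_p(x) = |px|_0$. The second step is to invoke Lemma~\ref{lem:f_g_disc_properties}, which gives $\tilde f_p(x) = |xp|_0$ for every $x \in D$ and in particular for $x \in \partial D$. Since $|px|_0 = |xp|_0$ and $\partial M$ is identified with $\partial D$, the two quantities coincide on the common boundary circle, which is exactly the conclusion.

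Since the two ingredients are already in place, there is no real obstacle; the only point to double-check is that the hypotheses required to apply Lemma~\ref{lem:f_g_disc_properties} (uniqueness of geodesics in the interior of $(D,g_0)$, used to identify $\tilde f_p$ with the distance to $p$) hold in the setting of the main theorem. This is part of the standing assumption (1) of Theorem~\ref{thm:main}, so both lemmas apply verbatim and the proof reduces to a single line combining them.
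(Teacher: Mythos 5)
Your proposal is correct and matches the paper's argument exactly: the paper's proof is a one-line appeal to Lemma~\ref{lem:f_p_properties} (which gives $f_p(x)=|px|_0$ on the boundary) and Lemma~\ref{lem:f_g_disc_properties} (which gives $\tilde f_p(x)=|xp|_0$). Your extra remark about checking the unique-geodesic hypothesis for $(D,g_0)$ is a reasonable bit of diligence but introduces nothing beyond the paper's reasoning.
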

\begin{proof}
	This lemma follows immediately from Lemma~\ref*{lem:f_p_properties} and Lemma~\ref*{lem:f_g_disc_properties}.
\end{proof}

\begin{defn}
	\label{defn:max}
	Suppose that $x$ is a point in the interior of $M$, and $p$ is a point on $\partial M$.  A point $q \in \partial M$
	is called a point of maximum if $f_p(x) = |pq|_0 - |xq|$.  It is called a \emph{nearest} point of maximum if, for
	every point of maximum $q'$, $|xq| \leq |xq'|.$
\end{defn}

\begin{lem}
	\label{lem:f_differentiable}
	Suppose that $p \in \partial M$, and $f_p$ is as above.  If
	$x$ is an interior point of $M$ and $f_p$ is differentiable at $x$, then $|df_p(x)| = 1$.  Moreover, if $q \in \partial M$
	is a point of maximum, and if $\gamma : [0, |xq|] \rightarrow M$ is the unit-speed shortest geodesic from $x$ to $q$,
	then $\textrm{grad} f_p(x) = \gamma'(0)$.  Furthermore, there is a unique point of maximum to $x$.
\end{lem}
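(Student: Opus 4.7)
The plan is to follow the template of Ivanov's disc argument from \cite{S1} in three steps. First, I would establish that $f_p$ grows at unit rate along any unit-speed minimizing geodesic $\gamma : [0, |xq|] \to M$ running from $x$ to a point of maximum $q$: the identity $|\gamma(t)\,q| = |xq| - t$ yields the lower bound
$$ f_p(\gamma(t)) \geq |pq|_0 - |\gamma(t)\,q| = f_p(x) + t, $$
while the $1$-Lipschitz property from Lemma~\ref*{lem:f_p_properties} supplies the matching upper bound, so equality holds.

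Second, this identity says that the directional derivative of $f_p$ at $x$ in the direction $\gamma'(0)$ is exactly $+1$. Because $f_p$ is differentiable at $x$ with a differential of operator norm at most $1$ (again by Lemma~\ref*{lem:f_p_properties}), the pairing $df_p(x)(\gamma'(0)) = 1$ against a unit vector is the equality case of Cauchy--Schwarz, forcing $|df_p(x)| = 1$ and $\textrm{grad}\, f_p(x) = \gamma'(0)$. This simultaneously proves the norm statement and identifies the gradient for every choice of point of maximum $q$.

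Third, I would deduce uniqueness of $q$. Suppose $q_1 \ne q_2$ are both points of maximum, with minimizing geodesics $\gamma_1, \gamma_2$; Step~2 applied to each gives the common initial velocity $\gamma_1'(0) = \gamma_2'(0) = \textrm{grad}\, f_p(x)$, so by uniqueness of the geodesic flow, $\gamma_1$ and $\gamma_2$ are segments of a single geodesic ray emanating from $x$. Assuming $|xq_1| < |xq_2|$, that ray would hit $\partial M$ at $q_1$ and again at $q_2$, and combining Step~1 applied to both points with the boundary-distance hypothesis would force the chain
$$ |pq_2|_0 - |pq_1|_0 = |xq_2| - |xq_1| = |q_1 q_2| = |q_1 q_2|_0, $$
placing $q_1$ on a minimizing $g_0$-geodesic in $(D, g_0)$ from $p$ to $q_2$. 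The main obstacle of the proof is ruling out this configuration; I expect to do so by using Lemma~\ref*{lem:f_g_disc_properties} together with the hypothesis that interior points of $(D,g_0)$ are joined by unique geodesics to contradict the existence of a boundary point $q_1 \ne p, q_2$ lying on such a minimizing geodesic. The first two steps are the standard distance-function computation, and Step~3 is the substantive piece.
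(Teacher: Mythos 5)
Your Steps 1 and 2 are correct and coincide with the paper's argument: unit growth of $f_p$ along $\gamma$ from the identity $f_p(\gamma(t)) = f_p(x)+t$, and then the equality case of the Lipschitz bound forcing $|df_p(x)|=1$ and $\mathrm{grad}\,f_p(x)=\gamma'(0)$.

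Step 3, however, has a genuine gap, and the configuration you hope to rule out cannot be ruled out. Your chain of equalities is valid and does place $q_1$ on a $g_0$-minimizing geodesic from $p$ to $q_2$ with all three points on $\partial D$; but nothing in the hypotheses forbids this. The uniqueness-of-geodesics assumption concerns \emph{interior} points of $(D,g_0)$ and says nothing about whether a minimizing geodesic between two boundary points may pass through a third boundary point. The round hemisphere (the model case of the whole theory) realizes exactly this: the minimizing geodesic between two non-antipodal equator points is an arc of the equator and passes through a continuum of other boundary points, and Lemma~\ref{lem:f_g_disc_properties} offers no contradiction. So your reduction terminates in a consistent configuration, not an absurdity. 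The paper avoids this by proving only what is needed downstream: uniqueness of the \emph{nearest} point of maximum (Definition~\ref{defn:max}). For two nearest points of maximum one has $|xq_1|=|xq_2|$ by definition, so once Step 2 gives a common initial tangent vector, the two minimizing geodesics coincide on their common interior arc and, having equal length, terminate at the same boundary point; no collinearity analysis in $(D,g_0)$ is required. You should either restrict the uniqueness claim to nearest points of maximum (which is all that Lemma~\ref{lem:order_of_points} and Proposition~\ref{prop:points_oriented_surface} use), or supply a genuinely new argument for full uniqueness, since the route you propose dead-ends.
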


\begin{proof}
	This proof will follow that of Lemma~1.2 in~\cite{S1}.  As in that proof, we will prove the statement for gradients
	instead of derivatives.

	Fix a point $x$ in the interior of $M$.  Suppose that $p \in \partial M$, and $f_p$ is differentiable at $x$, $q$ is a point of maximum
	for $p$, and the curve $\gamma : [0, |xq|] \rightarrow M$ is a unit-speed shortest curve connecting $x$ to $q$ in $(M,g)$;
	$\gamma(0) = x$, $\gamma(|xq|) = q$, and $|\gamma(t)\gamma(t')| = |t - t'|$ for all $t,t' \in [0,|xq|]$.  Since $x$ is in the interior
	of $M$, an initial arc of $\gamma$ is contained in the interior of $M$ and so is a geodesic.  As a result, $\gamma$ is differentiable
	at $0$, and $|\gamma'(0)| = 1$.  Since $f_p$ is nonexpanding and $f_p(q) = |pq|_0 = f_p(x) + |xq|$, $f_p(\gamma(t)) = f_p(x) + t$ for all $t \in [0,|xq|]$.

	From this observation that $f_p$ grows at unit rate along $\gamma$, and since this is the maximal growth rate (since $f_p$ is nonexpanding),
	$\textrm{grad} f_p(x) = \gamma'(0)$, and so $| \textrm{grad} f_p(x) | = 1$.

	To prove that there is a unique point of maximum to $x$, suppose that $q_1$ and $q_2$ are both nearest points of maximum to $x$.  Then the shortest geodesic
	from $x$ to $q_1$ and from $x$ to $q_2$ must have the same gradient at $x$. This is because the gradient along each curve has magnitude $1$, which is the maximal
	rate of growth of the function.  Hence, both shortest curves start at $x$ with the same tangent vector, and so due to the uniqueness of geodesics,
	that geodesic must hit the boundary of $M$ at $q_1$ first, or at $q_2$ first.  Thus, since the distances from $q_1$ to $x$ and $q_2$ to $x$ are equal,
	$q_1 = q_2$.
\end{proof}

We now investigate what happens if we have a finite set of points on the boundary of $M$.  For the remainder of the article, whenever we consider a sequence of points $\{ p_i \}$ on $\partial M$ or $\partial D$, we will assume that they are all distinct.  Furthermore, whenever we consider a sequence
of points $\{ q_i \}$ on $\partial M$ or $\partial D$ formed 
by taking the nearest points of maximum of unique $\{ p_i \}$ with respect to an interior point $x$, we will assume that
$\{ q_i \}$ are all unique as well.  This may require
a small perturbation of the points $\{ p_i \}$; this
does not affect the validity of the proofs throughout this article.

\begin{lem}
	\label{lem:order_of_points}
	Suppose that $p_1, \dots, p_n$ are points on $\partial M$ in counterclockwise order.  In addition, suppose that $x$ is a point in the interior of $M$,
	and that $f_{p_1}, \dots, f_{p_n}$ are all differentiable at $x$.  Let $q_1, \dots, q_n$ be the respective nearest points of maximum of
	$p_1, \dots, p_n$ with respect to $x$.  If $p_1, \dots, p_n$ and $q_1, \dots, q_n$ are all distinct, then $q_1, \dots, q_n$ are also in counterclockwise order
	on $\partial M$.
\end{lem}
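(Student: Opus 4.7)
My strategy is to combine the defining nearest-maximum property of the $q_i$'s with an ``uncrossing'' argument for geodesic segments in the disc $(D, g_0)$.

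First I would extract a four-point inequality. By definition of $q_i$, $f_{p_i}(x) = |p_i q_i|_0 - |x q_i|$ while $|p_i q|_0 - |x q| \leq f_{p_i}(x)$ for every $q \in \partial M$. Specialising $q = q_j$ and combining with the analogous inequality obtained by swapping $i \leftrightarrow j$ and adding gives, for every pair $i \neq j$,
\[
|p_i q_i|_0 + |p_j q_j|_0 \;\geq\; |p_i q_j|_0 + |p_j q_i|_0. \qquad (\ast)
\]

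Next, assume for contradiction that $q_1,\dots,q_n$ are not in counterclockwise order on $\partial D$. Since the cyclic order on three or more points is determined by its triples, some triple $\{i,j,k\}$ of indices has its $q$'s in reversed cyclic order from its $p$'s. Restricting to these three indices and organising by which of the three arcs of $\partial D \setminus \{p_i, p_j, p_k\}$ each of $q_i, q_j, q_k$ occupies (with care for the sub-ordering when multiple $q$'s lie in the same arc), I find a pair --- relabel it $(i, j)$ --- for which the four distinct points $p_i, p_j, q_i, q_j$ lie on $\partial D$ in one of the cyclic orders $p_i, p_j, q_j, q_i$ or $p_i, q_i, q_j, p_j$.

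For such a pair, the endpoints of the minimising $(D, g_0)$-geodesic from $p_i$ to $q_j$ and of the one from $p_j$ to $q_i$ alternate on $\partial D$, so by the Jordan curve theorem applied inside the disc they must meet at some interior point $r \in D$. Being minimising, $|p_i q_j|_0 = |p_i r|_0 + |r q_j|_0$ and $|p_j q_i|_0 = |p_j r|_0 + |r q_i|_0$, while the triangle inequality yields $|p_i q_i|_0 \leq |p_i r|_0 + |r q_i|_0$ and $|p_j q_j|_0 \leq |p_j r|_0 + |r q_j|_0$. Summing these last two reverses $(\ast)$, so $(\ast)$ must hold with equality and both triangle inequalities are tight. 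But this forces the broken path $p_i \to r \to q_i$ to be a smooth minimising geodesic through the interior point $r$, so the tangent of $p_i q_j$ at $r$ and the tangent of $p_j q_i$ at $r$ must coincide. Uniqueness of geodesics through an interior point then makes the two crossing segments locally agree, and extending along the common geodesic forces the boundary-hitting point to be simultaneously $q_i$ and $q_j$ --- contradicting the distinctness of the four endpoints.

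The principal obstacle is the combinatorial step of extracting the pair with crossed chords. A pair-by-pair analysis of $(\ast)$ is too weak on its own, since the configuration $p_i, q_j, p_j, q_i$ is wrongly-ordered at the pair level yet perfectly compatible with $(\ast)$; the reduction to a mis-ordered triple, together with an arc-counting argument on $\partial D \setminus \{p_i, p_j, p_k\}$, is what guarantees that one of the two genuinely ``crossed'' configurations arises.
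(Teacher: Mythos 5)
This is essentially the paper's own argument: your reduction to a misordered triple and then to a pair whose chords $p_iq_j$ and $p_jq_i$ cross is the paper's reduction to $n=3$ plus Lemma~\ref*{lem:separate} (the combinatorial extraction you flag as the main obstacle is exactly the part the paper outsources to the second half of Ivanov's Lemma~1.2), and your inequality $(\ast)$ together with the crossing-point/triangle-inequality computation reproduces the proof of Lemma~\ref*{lem:separate} almost verbatim. The one divergence is the endgame: the paper concludes from the forced equalities that $q_j$ is a point of maximum for $p_i$ (and vice versa) and gets $q_i=q_j$ from the uniqueness of the nearest point of maximum (Lemma~\ref*{lem:f_differentiable}), whereas you argue via smoothness of the concatenated minimizer at $r$ and uniqueness of geodesics in $(D,g_0)$; that also works, except it leaves the small loose end that the common forward geodesic from $r$ could a priori meet $\partial D$ at $q_i$ strictly before reaching $q_j$, a degenerate case the paper's finish sidesteps.
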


To prove this lemma, we may assume that $n = 3$ without a loss in generality.  This is because the cyclic ordering of
a collection of points on $S^1$ is determined by the cyclic ordering of all triplets.  For the next lemma, assume that $p_1, p_2,$ and $p_3$ and
$q_1, q_2$, and $q_3$ be as in the hypotheses of Lemma~\ref*{lem:order_of_points}.

\begin{lem}
	\label{lem:separate}
	Fix $x$ in the interior of $M$, and suppose that $p_1, p_2,$ and $p_3$ and $q_1, q_2,$ and $q_3$ are points on $\partial M = \partial D$
	so that $q_i$ satisfies $f_{p_i}(x) = |xq_i| + |p_i q_i|_0$, $f_{p_i}$ is differentiable at $x$ for all $i$, and $q_i \neq p_i$ for all $i$.  If $i \neq j$, the pair $\{ p_i, q_j \}$
	does not separate the pair $\{ p_j, q_i \}$.  By this we mean that both $p_j$ and $q_i$ lie in the same component of
	$S^1 \setminus \{ p_i, p_j \}$.
\end{lem}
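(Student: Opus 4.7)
My plan is to argue by contradiction. Suppose that for some $i \neq j$ the pair $\{p_i, q_j\}$ separates $\{p_j, q_i\}$ on the circle $\partial D = \partial M$. I will pick length-minimizing curves $\alpha, \beta$ in $(D, g_0)$, with $\alpha$ running from $p_i$ to $q_j$ (of length $|p_i q_j|_0$) and $\beta$ running from $p_j$ to $q_i$ (of length $|p_j q_i|_0$); these exist by compactness. Because the endpoints of $\alpha$ and $\beta$ link one another on $\partial D$, a Jordan curve argument in the closed disk $D$ forces $\alpha$ and $\beta$ to share at least one point $z$.

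Having produced $z$, I play two inequalities against one another. Since $z$ lies on the length-minimizing curves,
\[
|p_i q_j|_0 = |p_i z|_0 + |z q_j|_0, \qquad |p_j q_i|_0 = |p_j z|_0 + |z q_i|_0,
\]
while the triangle inequality in $(D, g_0)$ gives $|p_i q_i|_0 \leq |p_i z|_0 + |z q_i|_0$ and $|p_j q_j|_0 \leq |p_j z|_0 + |z q_j|_0$. Summing these four relations yields
\[
|p_i q_i|_0 + |p_j q_j|_0 \;\leq\; |p_i q_j|_0 + |p_j q_i|_0.
\]
On the other hand, the defining identity $f_{p_i}(x) = |p_i q_i|_0 - |xq_i|$ together with the bound $f_{p_i}(x) \geq |p_i q_j|_0 - |xq_j|$, and the analogous pair of relations for $p_j$, yields after summation and cancellation of the $|xq_k|$ terms the reverse inequality
\[
|p_i q_i|_0 + |p_j q_j|_0 \;\geq\; |p_i q_j|_0 + |p_j q_i|_0.
\]

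Thus equality holds throughout, and in particular $f_{p_i}(x) = |p_i q_j|_0 - |xq_j|$, so $q_j$ is itself a point of maximum for $p_i$ at $x$. But by Lemma~\ref{lem:f_differentiable} the point of maximum at a differentiability point is unique, so $q_j = q_i$, contradicting the standing assumption of this section that the $q_k$ are distinct. The one step that requires genuine care is the topological claim that $\alpha$ and $\beta$ must intersect: after replacing $\alpha$ with a simple subarc if necessary, $\alpha$ together with one of the arcs of $\partial D \setminus \{p_i, q_j\}$ bounds a Jordan region in $D$, and the separating hypothesis places $p_j$ and $q_i$ in the closures of the two distinct complementary regions, forcing any continuous path between them to cross $\alpha$.
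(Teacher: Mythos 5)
Your argument is correct and is essentially the paper's own proof: both proceed by contradiction, intersect the two minimizing geodesics $\alpha$ and $\beta$ at a point $z$, and play the triangle inequality at $z$ against the maximality defining $f_{p_i}$ and $f_{p_j}$ to force equality everywhere. You go one small step further than the written proof by explicitly invoking the uniqueness of the point of maximum (Lemma~\ref{lem:f_differentiable}) to conclude $q_j = q_i$ and contradict the standing distinctness assumption, which is exactly the contradiction the paper leaves implicit.
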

\begin{proof}
	Suppose that two pairs did separate; without loss of generality we may assume that $i = 1$ and $j = 2$.  Let $\alpha$ be a minimizing geodesic
	from $p_1$ to $q_2$ in $(D,g_0)$, and let $\beta$ be a minimizing geodesic from $p_2$ to $q_1$ in $(D,g_0)$.  Since $D$ is a disc, and the two pairs
	separate, $\alpha$ and $\beta$ must intersect at a point $z \in D$.  Thus, we have
	$$ |p_1 q_2|_0 + |p_2 q_1|_0 = |p_1 z|_0 + |z q_2|_0 + |p_2 z|_0 + |z q_1|_0 \geq |p_1 q_1|_0 + |p_2 q_2|_0,$$
	where the last inequality is due to the triangle inequality.  As a result,
	\begin{align*}
		|p_1 q_2|_0 - |x q_2| + |p_2 q_1|_0 - |x q_1| & \geq |p_1 q_1|_0 - |x q_1| + |p_2 q_2|_0 - |x q_2| \\
									  &  = f_{p_1}(x) + f_{p_2}(x).\\
	\end{align*}

	In addition, we observe that $|p_1 q_2|_0 - |x q_2| \leq f_{p_1}(x)$ and $|p_2 q_1|_0 - |x q_1| \leq f_{p_2}(x)$ from the definition
	of $f_{p_1}$ and $f_{p_2}$.  Thus, all of the above inequalities must be equalities, and so $q_1$ and $q_2$ are points of maximum for both
	$p_1$ and $p_2$.
\end{proof}

We use Lemma~\ref*{lem:separate} to prove Lemma~\ref*{lem:order_of_points}; this proof is identical to the second half of the proof
of Lemma~1.2 from \cite{S1} on page 5.

%%%%%%%%%%%%%%%%%%%%%%%%%%%%%%%%%%%%%%%%%%%%%%%%%%%%%%%%%%%%%%%%%%%%%%%%%%%%%%
\section{Oriented triangles and Proof of Theorem~\ref*{thm:main}}
%%%%%%%%%%%%%%%%%%%%%%%%%%%%%%%%%%%%%%%%%%%%%%%%%%%%%%%%%%%%%%%%%%%

\begin{defn}
	\label{defn:oriented}
	Suppose that $a$ and $b$ are points on $S^1 \subset \mathbb{R}^2$.  The triangle $\Delta_{ab}$ is the one formed
	by the segment from $a$ to $b$, the segment from $a$ to $(0,0)$, and from $b$ to $(0,0)$.  Furthermore,
	$\Delta_{ab}$ carries an orientation.  If $a$ and $b$ are antipodal or equal, then we say that $\Delta_{ab}$ is neutrally
	oriented.  If not, then $a$ and $b$ separate $S^1$ into two closed arcs $\alpha$ and $\beta$ with nonempty disjoint
	interiors, and such that the length of $\alpha$ is less than the length of $\beta$ (with respect to the standard measure
	on $S^1$).  

	\noindent {\bf Case 1:} If we parametrize $\alpha$ in a counterclockwise fashion, and if $\alpha$ starts at $a$ and ends at $b$,
	then the triangle is positively oriented.

	\noindent {\bf Case 2:} If we parametrize $\alpha$ in a counterclockwise fashion, and if $\alpha$ starts at $b$ and ends at $a$,
	then the triangle is negatively oriented.

	The \emph{oriented area} of $\Delta_{ab}$ is the area of the triangle if the orientation is positive, and the negative
	of the area of the triangle if the orientation is negative.  If $\Delta_{ab}$ is neutrally oriented, then its area is $0$ and so we define
	its oriented area to be $0$.  With a slight abuse of notation, we will denote this as
	$\textrm{Area}(\Delta_{ab})$.
\end{defn}

\begin{lem}
	\label{lem:area_sequence_specific}
	Suppose that we have a sequence of ordered collections of points $\mathcal{P}_1, \dots, \mathcal{P}_k, \dots$ from $S^1$,
	and define $|\mathcal{P}_k|$ to be 
	$$ \sum_{i=1}^n \textrm{Area}(\Delta_{p_i p_{i+1}})$$
	where $p_1, \dots, p_n$ are the points (in counterclockwise order) that comprise $\mathcal{P}_k$ (and $p_{n+1} = p_n$).  In addition, for every $k$, let $\ell(k)$ be the length of the \emph{longest} segment of $S^1 \setminus \mathcal{P}_k$
	(using the standard measure on $S^1$).  Note that $\ell(k) > 0$ for all $k$ (since each collection of points is finite).
	
	If $\lim_{k \rightarrow \infty} \ell(k) = 0$, then
	$$ \lim_{k \rightarrow \infty} |\mathcal{P}_k| = \pi.$$
\end{lem}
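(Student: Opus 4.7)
The plan is to reduce the sum to a trigonometric one and then use a Taylor expansion. Once the maximal gap $\ell(k)$ is less than $\pi$, every consecutive pair $p_i, p_{i+1}$ is non-antipodal, and the shorter arc between them, traversed counterclockwise, goes from $p_i$ to $p_{i+1}$. By Definition~\ref*{defn:oriented} (Case 1), the triangle $\Delta_{p_i p_{i+1}}$ is then positively oriented, so its oriented area agrees with its unsigned area.

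Next I would parametrize. Write $\theta_i \in (0, 2\pi)$ for the counterclockwise arc length from $p_i$ to $p_{i+1}$ (with $p_{n+1} = p_1$); this is also the angle the chord $p_i p_{i+1}$ subtends at the origin. Since $p_1, \ldots, p_n$ partition $S^1$ into the arcs of lengths $\theta_1, \ldots, \theta_n$, we have $\sum_{i=1}^n \theta_i = 2\pi$, and by definition $\theta_i \leq \ell(k)$ for every $i$. For the triangle with vertices $0, p_i, p_{i+1}$, the two sides from the origin have unit length and span an angle $\theta_i$, so
\[
\textrm{Area}(\Delta_{p_i p_{i+1}}) = \tfrac{1}{2}\sin(\theta_i)
\]
once $\ell(k) < \pi$. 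Thus $|\mathcal{P}_k| = \tfrac{1}{2}\sum_{i=1}^n \sin(\theta_i)$.

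Finally I would compare this sum to $\tfrac{1}{2}\sum \theta_i = \pi$ using $|\sin(\theta) - \theta| \leq \theta^3/6$ for $\theta \geq 0$. This gives
\[
\bigl| |\mathcal{P}_k| - \pi \bigr| = \tfrac{1}{2}\Bigl|\sum_{i=1}^n \bigl(\sin(\theta_i) - \theta_i\bigr)\Bigr| \leq \tfrac{1}{12}\sum_{i=1}^n \theta_i^3 \leq \tfrac{1}{12}\,\ell(k)^2 \sum_{i=1}^n \theta_i = \tfrac{\pi}{6}\,\ell(k)^2,
\]
which tends to $0$ as $k \to \infty$. No real obstacle arises; the only subtlety worth flagging is handling the small-$k$ regime where some $\theta_i$ may exceed $\pi$ and the orientation of some triangles could be negative, but this is washed out by restricting to $k$ large enough that $\ell(k) < \pi$, which happens for all but finitely many $k$ by hypothesis.
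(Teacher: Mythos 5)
Your proof is correct. It is also a genuinely different route from the paper's, because the paper gives no argument at all here: it simply defers to Lemma~2.1 of \cite{S1}, where the sum of oriented triangle areas is recognized as an approximation to the area enclosed by the unit circle, adding only the remark that the triangles are eventually positively oriented. You make the whole thing explicit and elementary: you check positivity of orientation directly from Definition~\ref*{defn:oriented} once $\ell(k)<\pi$, reduce the sum to $\tfrac12\sum_{i}\sin\theta_i$ with $\sum_i\theta_i=2\pi$ and $\theta_i\le\ell(k)$, and then the Taylor bound gives the quantitative estimate $\bigl|\,|\mathcal{P}_k|-\pi\,\bigr|\le \tfrac{\pi}{6}\,\ell(k)^2$, which is strictly more information than the qualitative convergence the lemma asserts. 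What the paper's citation buys is brevity and consistency with Ivanov's framework; what your computation buys is a self-contained proof with an explicit rate. One small point: you silently read the closing convention as $p_{n+1}=p_1$ rather than the $p_{n+1}=p_n$ printed in the statement; that is almost certainly a typo (the cyclic convention $\Delta_{v_n v_1}$ is used elsewhere in the paper), and in any case either reading yields the same limit, since the single omitted triangle has area at most $\ell(k)/2\to 0$.
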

\begin{proof}
	The proof of this fact is contained in the proof of Lemma~2.1 from~\cite{S1}; the limit
	is the area of a unit disc, which is $\pi$.  Note that
	for $k$ sufficiently large, each triangle formed from the
	collection $\mathcal{P}_k$ is positively oriented.
\end{proof}

\begin{prop}
	\label{prop:points_oriented_surface}
	Suppose that $p_1, \dots, p_n$ are distinct counterclockwise points on $\partial M$, and let $x$ in the interior of $M$ such that $f_{p_1}, \dots, f_{p_n}$
	are all differentiable at $x$.  In addition, let $q_1, \dots, q_n$ be points on $\partial M$ which are nearest points of minima to $x$
	with respect to $p_1, \dots, p_n$; suppose that they are distinct - by Lemma~\ref*{lem:order_of_points} they are oriented counterclockwise.  In addition, we have that
	there are unique length minimizing geodesics $\gamma_1, \dots, \gamma_n$ from $x$ to $q_1, \dots, q_n$ respectively.

	 If $\mathcal{P} = \{ v_1, \dots, v_n \}$ are the tangent vectors, then $$\sum_{i=1}^n \textrm{Area}(\Delta_{v_i v_{i+1}}) \leq \pi ( 1 + \frac{2 G}{\pi}).$$
\end{prop}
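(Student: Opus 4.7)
The plan is to interpret $\sum_i \textrm{Area}(\Delta_{v_iv_{i+1}})$ as a signed planar area in the tangent plane $T_xM$ and then bound it using the topology of $M$. A direct computation from Definition~\ref*{defn:oriented} gives
\[
\textrm{Area}(\Delta_{v_iv_{i+1}}) = \tfrac{1}{2}\, v_i \times v_{i+1},
\]
where $\times$ denotes the scalar cross product on $T_xM \cong \mathbb{R}^2$; by the shoelace formula the full sum equals the signed area, counted with multiplicity, of the closed polygon $P$ in $T_xM$ whose vertices in order are $v_1, \ldots, v_n$. In particular the sum is bounded by $\pi$ times the maximum winding number of $\partial P$ about any point of its complement.

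For the disc case $G = 0$, which should reduce to Ivanov's bound of $\pi$, one would first verify that $v_1, \ldots, v_n$ lie in counterclockwise order on the unit circle $S_x \subset T_xM$. This should follow by combining Lemma~\ref*{lem:order_of_points}, which places $q_1, \ldots, q_n$ in counterclockwise order on $\partial M$, with a planarity argument preventing two shortest geodesic arcs $\gamma_i$ and $\gamma_j$ from $x$ from swapping cyclic order at $x$ versus at the boundary when $M$ is topologically a disc. Once the $v_i$ are cyclically ordered on $S^1$, $P$ is a simple convex polygon inscribed in $S^1$, its interior winds at most once, and $\sum_i \textrm{Area}(\Delta_{v_iv_{i+1}}) \leq \pi$.

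For $G > 0$ the geodesics $\gamma_i$ may cross in the interior of $M$, and the $v_i$ need not remain in cyclic order on $S_x$. I would set up the planar graph $\Gamma \subset M$ whose vertices are $x$, the $q_i$, and all transverse pairwise intersection points of the $\gamma_i$; whose edges are the subarcs of the $\gamma_i$ between consecutive vertices together with the $n$ arcs of $\partial M$ between consecutive $q_i$; and whose faces are the components of $M \setminus \Gamma$. The Euler-characteristic relation $V - E + F = \chi(M) = 1 - 2G$ then produces an upper bound on $\#\{\gamma_i \cap \gamma_j\}$, and hence on the number of pairs $(i,j)$ whose cyclic order on $S_x$ is inverted relative to their cyclic order on $\partial M$. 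Showing that each such inversion contributes at most a controlled increment to the signed area of $P$, summing to at most $2G$ overall, would yield the claimed bound $\pi + 2G = \pi(1 + 2G/\pi)$.

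The hardest part will be the final quantitative accounting --- turning a purely topological bound on the number of crossings of the $\gamma_i$ into the sharp constant $2$ per unit of genus in the signed area of $P$. I expect the cleanest route to push the argument through the exponential map $\exp_x : T_xM \to M$, realizing $P$ as a Riemann-sum approximation of the closed curve in $T_xM$ traced by $q \mapsto v(q) \in S_x$ as $q$ goes once around $\partial M$, and then invoking Gauss--Bonnet on $M$ (with $2\pi\chi(M) = 2\pi - 4\pi G$) to control the total signed area swept out in $T_xM$. Additional care will be needed at the finitely many points of $\partial M$ where the shortest geodesic to $x$ fails to be unique --- i.e., where the cut locus of $x$ meets the boundary --- since there the map $v(q)$ acquires jump discontinuities whose contributions to the signed area must be tracked.
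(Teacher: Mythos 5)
There is a genuine gap, and it lies exactly where you flag the difficulty. Your reduction of $\sum_i \textrm{Area}(\Delta_{v_i v_{i+1}})$ to a signed polygon area via the cross product is fine, and your observation that for $G=0$ the cyclic order of the $v_i$ on $UT_xM$ matches that of the $q_i$ on $\partial M$ (giving the bound $\pi$) is correct. But your proposed mechanism for the genus correction --- bounding the number of transverse intersection points of the $\gamma_i$ via an Euler-characteristic count and converting crossings into order inversions --- cannot work, because the $\gamma_i$ do not cross at all: they are minimizing geodesics emanating from the single point $x$ to distinct boundary points, and two such minimizers meeting at a second interior point would yield two distinct shortest curves between a pair of points, a contradiction (the paper makes exactly this observation). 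So your graph $\Gamma$ has no crossing vertices, the Euler-characteristic relation yields no bound on inversions, and the argument stalls. The scrambling of the cyclic order of the $v_i$ relative to the $q_i$ is caused not by crossings but by the global topology: a family of pairwise disjoint simple arcs from $x$ to $\partial M$ on a genus-$G$ surface can wind through handles, so that cutting along them does not decompose $M$ in an order-preserving way.

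The paper's missing ingredient is a \emph{cut system}: a ``bouquet'' of $2G$ simple loops at $x$, chosen by successive length minimization so that they are disjoint from every minimizing geodesic from $x$ to $\partial M$ (except at $x$) and cut $M$ into an annulus. The endpoints of these loops partition $UT_xM$ into $4G$ arcs $\tau_1,\dots,\tau_{4G}$. Within each arc the tangent vectors $v_j$ of the geodesics are cyclically monotone, so the consecutive-pair triangles coming from a single arc contribute at most $\tfrac{1}{2}\textrm{Length}(\tau_k)$, summing to $\pi$ over all arcs; only the $4G$ ``transition'' terms between arcs are uncontrolled, and each is bounded by $\tfrac12$ in absolute value, giving $\pi + 2G$. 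Your closing suggestion to use Gauss--Bonnet is also unlikely to succeed as stated: the correction $2G$ here is combinatorial and metric-independent, whereas $\int_M K$ depends on the curvature and boundary geodesic curvature, and no mechanism is given for converting that integral into the constant $2$ per handle. To repair your argument you would need to replace the crossing count with some version of the cut-system decomposition.
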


\begin{lem}
	\label{lem:points_disc}
	Suppose that $q_1, \dots, q_n$ are distinct points on $\partial D$ which are oriented in counterclockwise order, and suppose that $x$ is a point on the interior
	of $D$.  If $\gamma_1, \dots, \gamma_n$ are disjoint simple curves such that $\gamma_i$ starts at $x$ and ends at $q_i$.  Then $q_1, \dots, q_n$ also
	are oriented in a counterclockwise fashion, as are the tangent vectors $\{ v_1, \dots, v_n \}$.
\end{lem}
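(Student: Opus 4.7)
The substance of the lemma is that the cyclic order of the tangent vectors $v_1,\dots,v_n$ at $x$ matches the counterclockwise order of the endpoints $q_1,\dots,q_n$ on $\partial D$. The plan is to reduce the statement to a standard fact about disjoint arcs in an annulus.

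First I would choose $\varepsilon>0$ small enough that the closed metric ball $\overline{B_\varepsilon(x)}$ is contained in the interior of $D$, is a topological disc, and meets each curve $\gamma_i$ in an initial arc. Since each $\gamma_i$ is simple, starts at $x$, and eventually reaches $\partial D$, by shrinking $\varepsilon$ further I may arrange that each $\gamma_i$ intersects $\partial B_\varepsilon(x)$ transversely for the first time at a single point $w_i$; moreover, as $\varepsilon\to 0$ the point $w_i$ lies (up to rescaling) in the direction $v_i$, so the cyclic order of $w_1,\dots,w_n$ on $\partial B_\varepsilon(x)$ is identical to the cyclic order of $v_1,\dots,v_n$ (with the orientation induced on $\partial B_\varepsilon(x)$ from the orientation of $D$).

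Next, let $A = \overline{D\setminus B_\varepsilon(x)}$. Then $A$ is a closed topological annulus whose two boundary components are $\partial D$ and $\partial B_\varepsilon(x)$. For each $i$, let $\gamma_i'$ be the subarc of $\gamma_i$ running from $w_i \in \partial B_\varepsilon(x)$ to $q_i \in \partial D$. The arcs $\gamma_1',\dots,\gamma_n'$ are pairwise disjoint simple arcs in $A$, each connecting the inner boundary component to the outer boundary component.

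The key step is the following topological fact: disjoint simple arcs in an annulus that connect the two boundary components preserve cyclic order, that is, the cyclic order of their endpoints on the inner boundary agrees (with the appropriate orientation conventions) with the cyclic order on the outer boundary. This can be proved by induction on $n$: cut $A$ along $\gamma_1'$ to obtain a topological disc $A'$, whose boundary is traversed by going along part of $\partial D$, along one side of $\gamma_1'$, along part of $\partial B_\varepsilon(x)$, and along the other side of $\gamma_1'$; the remaining arcs $\gamma_2',\dots,\gamma_n'$ become disjoint chords in $A'$, and the non-crossing property of disjoint chords in a disc forces the endpoints to match up in the correct cyclic order. Invoking this fact with the inner orientation chosen compatibly with the global orientation of $D$ gives that $w_1,\dots,w_n$ appear in counterclockwise order on $\partial B_\varepsilon(x)$, hence $v_1,\dots,v_n$ are in counterclockwise order.

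The main obstacle is simply ensuring that orientation conventions are applied consistently: the orientation of $\partial B_\varepsilon(x)$ induced from viewing $B_\varepsilon(x)$ as a small oriented disc, versus the orientation induced from viewing it as the inner boundary of the annulus $A$, are opposite, and one must keep track of this when invoking the annulus lemma. Everything else is standard planar topology, and no geometric input beyond the disjointness and simplicity of the $\gamma_i$ is needed.
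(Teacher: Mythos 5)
Your proof is correct and follows the same route the paper intends: the paper's own proof is a one-line appeal to ``the topology of the disc'' and the Jordan Curve Theorem, and your argument --- excising a small ball about $x$, truncating the curves to disjoint arcs joining the two boundary circles of the resulting annulus, and invoking the order-preservation of disjoint arcs in an annulus (proved by cutting along one arc and using non-crossing chords in a disc) --- is exactly the standard way to make that assertion precise. The only point worth flagging is that the conclusion about the $v_i$ presupposes the curves have well-defined, pairwise distinct tangent directions at $x$ (true in the application, where they are distinct minimizing geodesics), and your attention to the opposite induced orientations on $\partial B_\varepsilon(x)$ is the right detail to watch.
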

\begin{proof}
	This follows from the topology of the disc; in particular, the Jordan Curve Theorem directly leads to the conclusion of this lemma.
\end{proof}

For now, we will leave the statement of the Proposition~\ref*{prop:points_oriented_surface} as is, and will postpone the proof to the next section.
We now prove Theorem~\ref*{thm:main} following the same technique as \cite{S1}.

\begin{defn}
	\label{defn:omega_n}
	We define the $2$-form $\omega_n$ on $\mathbb{R}^n$ by
	$$ \omega_n = \sum_{i=1}^n d x_i \wedge d x_{i+1}. $$
\end{defn}

\begin{lem}
	\label{lem:pullback_M}
	For every $n \geq 2$, define $$F_M : M \rightarrow \mathbb{R}^n$$ by $$F(x) = (f_{p_1}(x), \dots, f_{p_n}(x))$$.
	We then have $$\int_M F_M^* \omega_n \leq 2 \pi (1 + \frac{2G}{\pi}) \textrm{Area}(M,g).$$
\end{lem}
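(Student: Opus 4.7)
The plan is to compute $F_M^*\omega_n$ pointwise at the (almost everywhere) locus where all $f_{p_i}$ are differentiable, and then to bound the integrand using Proposition~\ref{prop:points_oriented_surface}. The key point is to identify the $2$-form $df_{p_i} \wedge df_{p_{i+1}}$ with the oriented area of the triangle $\Delta_{v_i v_{i+1}}$ associated to the unit tangent vectors of the geodesics from $x$ to the nearest points of maximum.

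First, by Lemma~\ref{lem:f_p_properties} each $f_{p_i}$ is $1$-Lipschitz, hence differentiable almost everywhere by Rademacher's theorem. Since we have only finitely many $i = 1, \dots, n$, there is a full-measure subset $M' \subset M$ of interior points where every $f_{p_i}$ is differentiable simultaneously. After a generic perturbation of the $p_i$ (as noted in the discussion preceding Lemma~\ref{lem:order_of_points}), we may further restrict $M'$ so that, at every $x \in M'$, the nearest points of maximum $q_1(x), \dots, q_n(x)$ are distinct. On $M'$, Lemma~\ref{lem:f_differentiable} gives $\operatorname{grad} f_{p_i}(x) = v_i(x)$, a unit vector pointing along the (unique) minimizing geodesic from $x$ to $q_i(x)$.

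Next I would evaluate the wedge $df_{p_i}(x) \wedge df_{p_{i+1}}(x)$ in a positively oriented orthonormal frame: this equals $\det(v_i, v_{i+1}) \, dA_g = \sin(\theta_i) \, dA_g$, where $\theta_i$ is the counterclockwise angle from $v_i$ to $v_{i+1}$. A direct check from Definition~\ref{defn:oriented} shows that for any two points $a,b$ on $S^1$, the oriented area $\textrm{Area}(\Delta_{ab})$ equals $\tfrac{1}{2}\sin\phi$, where $\phi$ is the counterclockwise angle from $a$ to $b$ (this single formula covers both the positively and negatively oriented cases, since the sign of $\sin\phi$ flips as $\phi$ crosses $\pi$, matching the orientation convention; it is also consistent with the neutral case). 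Consequently
$$
df_{p_i} \wedge df_{p_{i+1}} \big|_x \;=\; 2 \, \textrm{Area}(\Delta_{v_i(x)\, v_{i+1}(x)}) \, dA_g,
$$
and summing over $i$ gives $F_M^*\omega_n |_x = 2 \sum_{i=1}^n \textrm{Area}(\Delta_{v_i v_{i+1}}) \, dA_g$ on $M'$.

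By Lemma~\ref{lem:order_of_points}, the points $q_1, \dots, q_n$ are in counterclockwise order on $\partial M$, so Proposition~\ref{prop:points_oriented_surface} applies at every $x \in M'$ and yields
$$
\sum_{i=1}^n \textrm{Area}(\Delta_{v_i v_{i+1}}) \;\leq\; \pi\Bigl(1 + \tfrac{2G}{\pi}\Bigr).
$$
Integrating this pointwise bound over $M'$ (which has full measure) gives the claimed estimate $\int_M F_M^*\omega_n \leq 2\pi(1 + \tfrac{2G}{\pi})\, \textrm{Area}(M,g)$. The main obstacle is really the bookkeeping for the pointwise identification of the wedge product with an oriented triangle area, in particular verifying that the sign convention from Definition~\ref{defn:oriented} matches the sign of $\sin\theta_i$ in every case; the rest is an essentially routine consequence of Rademacher's theorem together with Proposition~\ref{prop:points_oriented_surface}, whose own proof is deferred to the next section.
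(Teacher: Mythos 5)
Your proof is correct and follows essentially the same route as the paper: identify $\tfrac12\,df_{p_i}\wedge df_{p_{i+1}}$ with the oriented area of $\Delta_{v_iv_{i+1}}$ at points of differentiability, bound the resulting sum pointwise by $\pi(1+\tfrac{2G}{\pi})$ via Proposition~\ref{prop:points_oriented_surface}, and integrate. Your version is in fact slightly more careful than the paper's (explicit $\sin\theta$ computation, Rademacher, and the appeal to Lemma~\ref{lem:order_of_points} to verify the Proposition's hypothesis), and it carries the factor of $2$ through correctly to match the stated bound.
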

\begin{proof}
	Let $\eta$ be the $2$-form $F_M^* \omega_n$ on $M$; $\eta$ is measurable and so $\eta(x) = A(\eta) d A$, where $A$ is a measurable real-valued function which is differentiable almost everywhere,
	and $d A$ is the standard area form (which exists since $M$ is orientable).
	Observe that $\eta = F_M^* \omega_n = \sum_{i=1}^n d f_{p_i} \wedge f_{p_{i+1}}$.  Observe
	further that $1/2 A(d f_{p_i} \wedge d f_{p_{i+1}}$ is equal to the oriented area of the triangle $\Delta v_i v_{i+1}$
	where $v_i = d f_{p_i} (x)$ and $v_{i+1} = d f_{p_{i+1}} (x)$ (here, $v_i$ and $v_{i+1}$ are in
	in $T^*_x M$).  As such, $1/2 A(\eta)$ is equal to the sum of the oriented areas of the triangles $\Delta v_i v_{i+1}$
	with $i \in \{1, \dots, n \}$.

	Note that all $v_i$ are unit vectors of $T^*_x M$, and so are in $UT^*_x M$.  Then by Proposition~\ref*{prop:points_oriented_surface}
	and Lemma~\ref*{lem:points_disc} together imply that 
	$$1/2 A(\eta) \leq \pi (1 + \frac{2G}{\pi}). $$
	As a result,
	$$ \int_M \eta \leq \int_M A(\eta) d A \leq \pi ( 1 + \frac{2G}{\pi}) \textrm{Area}(M,g). $$
\end{proof}

\begin{lem}
	\label{lem:pullback_D}
	Suppose that $\{ \mathcal{P}_1, \dots \}$ is a sequence of collections of points on $\partial D$.  For each
	sequence $\mathcal{P}_k = \{ p_n, \dots, p_n \}$, define $$F_D : D \rightarrow \mathbb{R}^n$$ by $$F(x) = (\tilde{f}_{p_1}(x), \dots, \tilde{f}_{p_n}(x)).$$
	We then have $$ \lim_{n \rightarrow \infty} \int_D F_D^* \omega_n = 2 \pi \textrm{Area}(D,g_0).$$
\end{lem}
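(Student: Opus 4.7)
\
The plan is to use Lemma~\ref{lem:f_g_disc_properties} to replace each $\tilde f_{p_i}$ by the concrete distance function $|\cdot p_i|_0$, compute the density of $F_D^*\omega_n$ pointwise in terms of oriented areas of triangles on the unit circle of $T_xD$, apply Lemma~\ref{lem:area_sequence_specific} fiberwise, and then pass to the limit by dominated convergence (where we interpret $n\to\infty$ as $k\to\infty$ with the largest gap $\ell(k)\to 0$).

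By Lemma~\ref{lem:f_g_disc_properties}, $\tilde f_{p_i}(x)=|xp_i|_0$, so
\[F_D^* \omega_n \;=\; \sum_{i=1}^n d(|\cdot p_i|_0) \wedge d(|\cdot p_{i+1}|_0).\]
At almost every interior $x$ each Lipschitz function $|\cdot p_i|_0$ is differentiable; the argument of Lemma~\ref{lem:f_differentiable} applied directly to $(D,g_0)$ shows that its gradient is a unit vector $v_i\in T_xD$, tangent at $x$ to the unique $g_0$-geodesic from $p_i$ to $x$ and pointing away from $p_i$. Letting $\theta_i\in[0,2\pi)$ denote the counterclockwise angle from $v_i$ to $v_{i+1}$, a direct computation against an oriented orthonormal frame yields
\[F_D^*\omega_n \;=\; \Bigl(\sum_{i=1}^n \sin\theta_i\Bigr)\, dA \;=\; 2\Bigl(\sum_{i=1}^n \textrm{Area}(\Delta_{v_iv_{i+1}})\Bigr)\, dA,\]
where the second equality is exactly Definition~\ref{defn:oriented}.

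Next I would verify that, as $\ell(k)\to 0$, the vectors $\{v_1,\dots,v_n\}$ become dense on the unit circle in $T_xD$, with longest angular gap tending to zero. Two ingredients feed into this: (i) Lemma~\ref{lem:points_disc} preserves cyclic order, so the $v_i$ cycle counterclockwise exactly once and $\sum_i\theta_i=2\pi$; and (ii) under the uniqueness-of-geodesics hypothesis, the direction map $\partial D\to UT_xD$, sending a boundary point $p$ to the $g_0$-geodesic direction at $x$ away from $p$, is a homeomorphism, so denseness of $\{p_i\}$ in $\partial D$ transfers to denseness of $\{v_i\}$ in $UT_xD$. Given this, Lemma~\ref{lem:area_sequence_specific} applied on the unit circle in $T_xD$ gives $\sum_i\textrm{Area}(\Delta_{v_iv_{i+1}})\to\pi$ for almost every $x\in D$, so the density of $F_D^*\omega_n$ tends to $2\pi$ a.e.

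Finally the integrand is uniformly bounded in $n$: since $\theta_i\ge 0$, $\sum_i\theta_i=2\pi$, and $|\sin\theta|\le\theta$ for $\theta\ge 0$, we have $\bigl|\sum_i\sin\theta_i\bigr|\le 2\pi$ pointwise. Dominated convergence then delivers $\lim_{n\to\infty}\int_D F_D^*\omega_n=2\pi\,\textrm{Area}(D,g_0)$. The main obstacle I expect is item (ii): showing that $p\mapsto v(p,x)$ is a homeomorphism from $\partial D$ onto $UT_xD$ rather than merely a continuous injection. This is where the uniqueness of geodesics does essential work, by ruling out focal/conjugate behavior and guaranteeing that nearby boundary points correspond to nearby directions at $x$.
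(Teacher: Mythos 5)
Your proposal is correct and follows essentially the same route as the paper: identify $\tilde f_{p_i}$ with $|\cdot\,p_i|_0$, use the homeomorphism $\partial D \to UT_xD$ given by geodesic directions to transfer denseness of the $p_i$ to denseness of the $v_i$, and apply Lemma~\ref{lem:area_sequence_specific} pointwise to get the density converging to $2\pi$. You are in fact slightly more careful than the paper, which stops at the pointwise statement $A(\eta)\to 2\pi$; your uniform bound $\bigl|\sum_i\sin\theta_i\bigr|\le\sum_i\theta_i=2\pi$ and the dominated convergence step make the passage from pointwise convergence of densities to convergence of the integrals explicit.
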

\begin{proof}
	In this case, by Lemma~\ref*{lem:f_p_properties} we have $\tilde{f}_p (x) = |px|_0$, $\tilde{f}_p$ is differentiable everywhere on the interior of $D$ and
	geodesics are unique, since $\textrm{grad} f_p(x)$ is the unit vector opposite to the tangent of the unique geodesic from $x$ to $p$.
	Thus, the map that sends $p$ to $d f_p(x)$ is then a homeomorphism from $S^1$ to $UT^*_x D$.  Let $\mathcal{P} = \{ p_i \}$
	is a sequence of counterclockwise points on $\partial D$, let $\delta(\mathcal{P})$ be the length of the longest segment of $S^1 \setminus \{ p_i \}$.
	
	Let us choose a collection of points on $\partial D$ $\{ \mathcal{P}_1, \mathcal{P}_2, \dots \}$ so that $\delta(\mathcal{P}_i) \rightarrow 0$
	as $i \rightarrow \infty$.  As in the proof of Lemma~\ref*{lem:pullback_M}, for a given $\mathcal{P}_n$ the corresponding
	points $\{ v_i \}$ on $UT^*_x D$ are the images of the points under the homeomorphism from $\partial D = S^1 \rightarrow UT^*_x D$.

	Since it is a homeomorphism, if $\mathcal{V}_n$ is the set of points from $UT^*_x D$ that correspond to $\mathcal{P}_k$, the longest
	segment of $UT^*_x D \setminus \mathcal{V}_n$ also goes to $0$ as $k$ goes to $\infty$.  As per the proof of Lemma~\ref*{lem:pullback_M},
	we observe that if $\eta = F_D^* \omega_n$, then $\eta = A(\eta) d A$ and $1/2 A(\eta)$ is equal to the sum of the oriented areas of the
	triangles $\Delta v_i v_{i+1}$.  Lemma~\ref*{lem:area_sequence_specific} implies that all of these triangles are positively oriented for sufficiently 
	large $n$, and the fact that the longest segment
	of $UT^*_x D \setminus \mathcal{V}_n$ goes to $0$ as $k$ goes to $\infty$ implies that, as $k$ goes to $\infty$, $1/2 A(\eta)$ goes to $\pi$,
	so $A(\eta)$ goes to $2 \pi$.
\end{proof}

\begin{proof}[Proof of Theorem~\ref*{thm:main}]
	Let $F_M$, $F_D$, and $\omega_n$ be defined as above.
	From Lemma~\ref*{lem:f_agreement}, if $x$ is a point on $\partial D = \partial M$, and if $p$ is another point on $\partial D = \partial M$, then 
	$f_p(x) = \tilde{f}_p(x) = |px|_0$.  Thus, $F_M$ and $F_D$ agree on $\partial M = \partial D$.  Since $\omega_n$ is a closed $2$-form on $\mathbb{R}^n$, it is exact, and so $\omega_n = d \mu_n$ for some $1$-form $\mu_n$.  Then $F_M^* \omega_n = F_M^* d \mu_n = d F_M^* \mu_n$
	and $F_D^* \omega_n = F_D^* d \mu_n = d F_D^* \mu_n$.  Using Stokes' Theorem and the fact that $F_M^* \mu_n = F_D^* \mu_n$ on
	$\partial M = \partial D$, we have
	\begin{align*}
		\int_M A(\omega_n) d A &= \int_M F_M^* \omega_n \\
			               &= \int_{\partial M} F_M^* \mu_n \\
				       &= \int_{\partial D} F_D^* \mu_n \\
				       &= \int_D F_D^* \omega_n \\
				       &= \int_D A(\omega_n) d A
	\end{align*}
	This is true for any collection $\mathcal{P}$ of points on $S^1 = \partial M = \partial D$.  For any such collection, by Lemma~\ref*{lem:pullback_M},
	$2 \pi \textrm{Area}(M,g) \geq \int_M A(\omega_n) d A$.  By Lemma~\ref*{lem:pullback_D}, there is a sequence of collections $\{ \mathcal{P}_k \}$
	of points on the boundary so that $\int_D A(\omega_n) d A \rightarrow 2 \pi \textrm{Area}(D,g_0)$ as $k \rightarrow \infty$.

	Combining these estimates and taking the limit as $k \rightarrow \infty$, we have
	$$ 2 \pi (1 + \frac{2 G}{\pi})\textrm{Area}(M,g) \geq 2 \pi \textrm{Area}(D,g_0).$$
	Dividing both sides by $2 \pi$ completes the proof.
\end{proof}

%%%%%%%%%%%%%%%%%%%%%%%%%%%%%%%%%%%%%%%%%%%%%%%%%%%%%%%%%%%%%%%%%%%%%%%%%%%%%%%%%%%%%%
%%%%%%%%%%%%%%%%%%%%%%%%%%%%%%%%%%%%%%%%%%%%%%%%%%%%%%%%%%%%%%%%%%%%%%%%%%%%%%%%%%%%%%
%%%%%%%%%%%%%%%%%%%%%%%%%%%%%%%%%%%%%%%%%%%%%%%%%%%%%%%%%%%%%%%%%%%%%%%%%%%%%%%%%%%%%%
\section{Proof of Proposition~\ref*{prop:points_oriented_surface}}
%%%%%%%%%%%%%%%%%%%%%%%%%%%%%%%%%%%%%%%%%%%%%%%%%%%%%%%%%%%%%%%%%%%%%%%%%%%%%%%%%%%%%%
%%%%%%%%%%%%%%%%%%%%%%%%%%%%%%%%%%%%%%%%%%%%%%%%%%%%%%%%%%%%%%%%%%%%%%%%%%%%%%%%%%%%%%
%%%%%%%%%%%%%%%%%%%%%%%%%%%%%%%%%%%%%%%%%%%%%%%%%%%%%%%%%%%%%%%%%%%%%%%%%%%%%%%%%%%%%%

We begin this section with defining and proving the existence of special sequences of curves
with respect to an interior point $x$ of $(M,g)$.

\begin{defn}
	\label{defn:bouquet}
	Suppose that $(M,g)$ is a genus $G$ surface, and $x$ is an interior point of $M$.  We say that a sequence of $2G$ curves
	$\eta_1, \dots, \eta_{2G}$ which start and end at $x$ form a \emph{bouquet} with respect to $x$ if the following are true:
	\begin{enumerate}
		\item	If we remove $\eta_1, \dots, \eta_{2G}$ from $(M,g)$, then we obtain a half-open annulus.  One
			of the boundary components is the original $S^1$ (which we will call $C_1$), and one is an open $S^1$ that comes from
			removing these curves (which we will call $C_2$).
		\item All $\{ \eta_i \}$ are disjoint except for their initial and final points.
		\item	Suppose that $q$ is a point on $\partial M$, and $\nu$ is a curve from $q$ to $x$ which is length minimizing.
			Then $\nu$ only intersects each $\eta_i$ at $x$, the initial point of $\nu$.
	\end{enumerate}
	This is depicted in Figure~\ref*{fig:basic_unfolding}.
\end{defn}

\begin{figure}
	\caption{A bouquet for the genus-1 surface}
	\centerline{\includegraphics[width=0.5\textwidth]{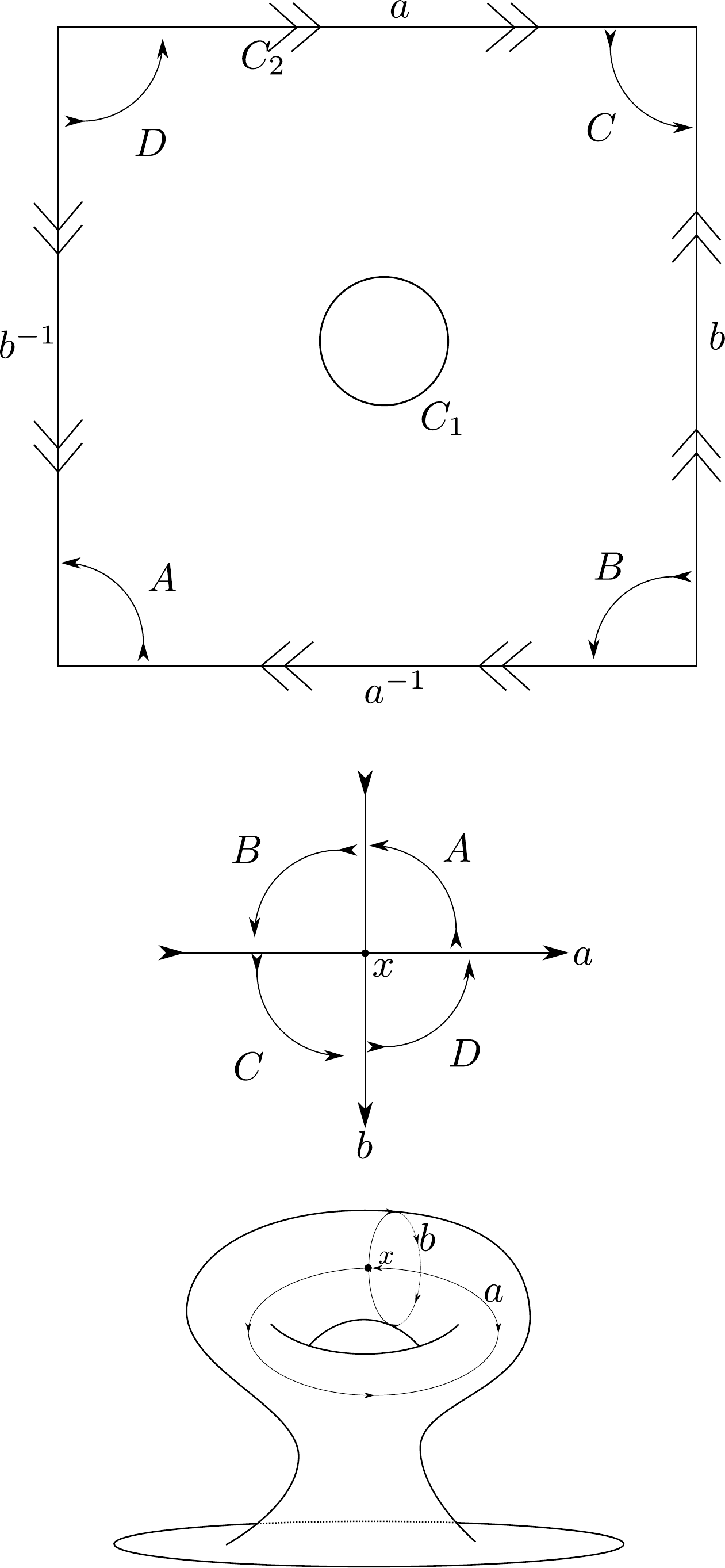}}
	\label{fig:basic_unfolding}
\end{figure}

\begin{lem}
	\label{lem:bouquet}
	Suppose that $(M,g)$ is a genus $G$ surface, and $x$ is an interior point of $M$.  Then there exists a bouquet $\eta_1, \dots \eta_{2G}$
	with respect to $x$.
\end{lem}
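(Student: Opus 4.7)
I will combine a standard topological construction of a bouquet with a geometric adjustment using the cut locus $C_x$ of $x$ in $(M,g)$ to enforce condition (3).

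For the topological part, since $M$ has $\chi(M)=1-2G$ and $\pi_1(M,x)$ is free on $2G$ generators, a routine argument (triangulate $M$ with $x$ as a vertex, take a spanning tree of the 1-skeleton containing $\partial M$, and collapse it to $x$) yields $2G$ piecewise-smooth simple loops $\eta_1,\ldots,\eta_{2G}$ based at $x$, disjoint outside $x$, whose complement in $M$ is an open annulus with one boundary $\partial M$ and the other boundary a topological $S^1$ coming from the doubled union of the $\eta_i$. This realizes conditions (1) and (2).

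To arrange condition (3), I will homotope each $\eta_i$ (rel $x$) into a concatenation of two minimizing radial geodesics from $x$ with an arc of $C_x$ in between. Two classical facts drive the construction: the interior of any minimizing geodesic from $x$ is disjoint from $C_x\cup\{x\}$; and $C_x\cup\partial M$ is a deformation retract of $M\setminus\{x\}$, so the $1$-complex $C_x$ carries enough homotopy to realize each free generator of $\pi_1(M,x)$ as a loop of the form $x\to y_i^-\to(\text{arc in }C_x)\to y_i^+\to x$, where the first and last segments are minimizing geodesics from $x$ to cut-locus points. With this realization, for any minimizing $\nu$ from $x$ to $q\in\partial M$, the interior of $\nu$ avoids $C_x$ and hence avoids the middle (cut-locus) portion of every $\eta_i$. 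Moreover, by injectivity of $\exp_x$ on its Dirichlet region $\{(v,t):0\le t<c(v)\}\subset UT_xM\times[0,\infty)$, two distinct minimizing geodesics from $x$ meet only at $x$; so the radial portions of $\eta_i$ meet $\nu$ only at $x$, provided their initial directions at $x$ differ from the initial direction of $\nu$. Since the set of initial directions of minimizing geodesics $x\to\partial M$ is the image of a continuous map from $\partial M$ to $UT_xM$, hence at most $1$-dimensional in a $1$-dimensional space, a generic choice of the $4G$ radial directions of the $\eta_i$ avoids it; a final small transversality perturbation removes any remaining isolated intersections away from $x$.

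\textbf{Main obstacle.} The principal difficulty is the structural analysis of $C_x$ for a \emph{general} smooth metric: $C_x$ need not be a finite graph, so extracting $2G$ pairwise disjoint (outside $x$) cut-locus arcs that realize all of the free generators of $\pi_1(M,x)$ may require either a genericity/real-analyticity assumption on $g$ or an approximation argument followed by a limit. A secondary technical point is that after the radial/cut-locus replacement, the $\eta_i$ must remain \emph{embedded} and pairwise disjoint outside $x$; this can be arranged by standard topological surgery inside $M\setminus\{x\}$ once the rough construction is in place, but it has to be done carefully enough that the complement-is-an-annulus conclusion is preserved.
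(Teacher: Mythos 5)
Your approach is genuinely different from the paper's: the paper builds the bouquet iteratively, taking $\eta_1$ to be a shortest noncontractible loop based at $x$, then repeatedly cutting open and taking shortest arcs between the copies of $x$ on the new boundary, and it establishes property (3) by a cut-and-exchange argument (an interior intersection of $\nu$ with some $\eta_i$ would let one shorten either $\nu$ or $\eta_i$, a contradiction with minimality). Your cut-locus strategy is a reasonable alternative in spirit, but as written it has gaps that are not merely technical.

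The central gap is the one you flag yourself: for a general smooth metric the cut locus $C_x$ need not be a finite graph or even triangulable (Gluck--Singer), and here $M$ moreover has boundary, which changes the cut-locus theory (minimizing curves from $x$ to $\partial M$ may contain boundary arcs, and the deformation-retract statement for $C_x\cup\partial M$ needs proof in this setting). Since the lemma is stated for an arbitrary metric and you do not supply the approximation-and-limit argument, the construction of the $2G$ disjoint cut-locus arcs is not established. Two further steps would fail as written. First, the genericity claim for the $4G$ radial directions is a non sequitur: the set of initial directions of minimizing geodesics from $x$ to $\partial M$ is a closed subset of $UT_xM\cong S^1$, and ``at most $1$-dimensional in a $1$-dimensional space'' does not make its complement nonempty --- it can perfectly well be all of $S^1$. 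Second, the closing ``small transversality perturbation'' is incompatible with condition (3): that condition must hold against the \emph{entire} family of minimizing curves from $x$ to all of $\partial M$, whose union is typically a two-dimensional region of $M$; the only reason your unperturbed curves avoid this region is that they are built from minimizing radial geodesics and cut-locus points, and any perturbation destroys exactly that structure, after which no mechanism prevents intersections. The paper's exchange argument avoids all of these issues because minimality of the $\eta_i$ in their (relative) homotopy classes is a property you can play off directly against the minimality of $\nu$, with no analysis of $C_x$ required.
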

\begin{proof}
	We will prove the existence of such a sequence of curves by performing $G$ pairs of operations.  We will define the first pair of operations in detail,
	and then the others will be defined analogously.  Along the way, we will show that Properties 2 and 3 above (the no intersection properties)
	must be true for every intermediate sequence of curves.  After $i$ operations, we will obtain $2 i$ curves, which when removed from $M$ will result in a genus $G - i$ surface with
	two holes, one equal to the original $S^1$, and one equal to the open curve resulting from removing the $2 i$ curves.
	After $G$ operations, the result is an annulus with two boundary components, as desired.

	Let us now describe the process, starting from the first step.  To form $\eta_1$, we choose the shortest loop with endpoints equal to
	$x$ which is noncontractible and has minimal length.  Clearly, minimality guarantees that it is simple. Suppose that $q$ is a point on the boundary of $M$, and $\nu$ is a shortest curve
	from $x$ to $q$.  If the tangent vector of $\nu$ is parallel to the initial or final tangent vector of $\eta_1$, then the first segment of
	$\nu$ will be equal to $\eta_1$, which means that $\nu$ is not the shortest curve.
	
	Suppose that $y$ is another point of intersection;
	by the same argument as above, the tangent vector of $\nu$ and $\eta_1$ at the point $y$ must be linearly independent.  Then we can consider
	the two arcs from $x$ along $\eta_1$ to $y$, as well as the segment of $\nu$ from $x$ to $y$.  If either of the first two arcs are shorter than the arc along $\nu$, then $\nu$ is not the shortest curve.  If the arc along $\nu$ is shorter or equal in length to
	both of the original arcs, then we can replace one of the two arcs of $\eta_1$ with the arc of $\nu$ from $x$ to $y$ so that the result is
	noncontractible.  In addition, after smoothing out the resulting singularity, we can conclude that this new curve is both noncontractible and shorter than $\eta_1$, which is a contradiction.

	If we cut along $\eta_1$, we obtain a genus $G - 1$ surface with three boundary components.  One of these components corresponds to the original
	boundary component of the surface, and the other two come from cutting the original surface along $\eta_1$.  There are two points, $x_1$ and $x_2$,
	one on each of these components, which correspond to the original point
	$x$ before the unfolding.
	
	We define $\eta_2$ be the shortest curve that goes from the point $x_1$ on one of the boundary components to
	the point $x_2$ on the other boundary component; this curve does not touch either of the components with $x_1$ or $x_2$ since these components, as well as $\eta_2$, are all geodesics (and are uniquely determined by initial point and initial tangent vector).  It is also simple, as it must be length minimizing.  Suppose that $\nu$ is the shortest curve from $x$ to $q$, a point
	on the boundary of $M$.  Suppose that $\nu$ intersects $\eta_2$ at a point $y$ other than $x$.  The tangents of these curves at
	$y$ are linearly independent (otherwise $\nu$ would contain $\eta_2$ as a component, which would make it not the shortest curve).  Then we can follow the same procedure as before using $\eta_2$ instead of $\eta_1$; we can replace a segment of $\nu$ with a segment of $\eta_2$ to form a shorter competitor to $\nu$ (which is impossible), or replace a segment of $\eta_2$ with a segment of $\nu$ to form a better competitor to $\eta_2$, which is also impossible.
    This step is shown in Figure~\ref*{fig:bouquet}.

    Once we have found $\eta_1$ and $\eta_2$, if $G > 1$ we move on to form $\eta_3$ and $\eta_4$.  To form $\eta_3$, we do the following.  First, we cut along $\eta_1$ and $\eta_2$, the result being a surface with two boundary components, one ($B_1$) which corresponds to the original boundary of the surface, and one ($B_2$) which corresponds to cutting along $\eta_1$ and $\eta_2$.  $B_2$ contains $4$ points which correspond to $x$; $x_1, \dots, x_4$.  We then choose $\eta_3$ to be the shortest curve which starts at some $x_i$ and ends at some $x_j$ (potentially with $i = j$), and which is noncontractible relative to $B_2$.  $\eta_3$ can then be interpreted as a curve on the original surface $M$ starting and ending at $x$.
    
    Due to the uniqueness of geodesics, $\eta_3$ intersects $B_2$ only at its endpoints.  If $\nu$ is a length minimizing curve from $x$ to $y$, a point on $\partial M$, then it does not intersect $\eta_3$.  This argument is the
    same as that employed to show that this does not happen for $\eta_1$.
    
    As before, to form $\eta_4$, we cut along $\eta_1, \eta_2,$ and $\eta_3$.
    The result is a genus $G - 2$ surface with three boundary components,
    one which corresponds to the original boundary component, and two which come
    from cutting along $\eta_1, \eta_2,$ and $\eta_3$.  Each of these two components, $X$ and $Y$, contain points which correspond to $x$.
    
    We then define $\eta_4$ as the shortest curve which starts at a point corresponding to $x$ on $X$ to a point which corresponds to $x$ on $Y$.  Again, the uniqueness of geodesics implies that $\eta_4$ only intersects $\eta_1, \eta_2,$ and $\eta_3$ at its endpoints, and is simple (as it is length minimizing).  Furthermore, $\eta_4$ corresponds to a loop which starts and ends at $x$ on the original surface
    $M$.  If $\nu$ is a length minimizing curve which start at $x$ and ends at $y$ on $\partial M$, then it does not intersect $\eta_4$ except at $x$
    by the same argument that we employed above to show that the same
    property holds for $\eta_2$.
    
    If $G > 2$, then we execute the same procedure to form $\eta_5$ and $\eta_6$.  We continue to define $\eta_{2i + 1}$ and $\eta_{2i + 2}$
    until we obtain $\eta_1, \dots, \eta_{2G}$.  These curves
    satisfy all of the desired properties, completing the proof.
\end{proof}

\begin{figure}
	\caption{Building a bouquet for the genus-1 surface}
	\centering
	\includegraphics[width=0.5\textwidth]{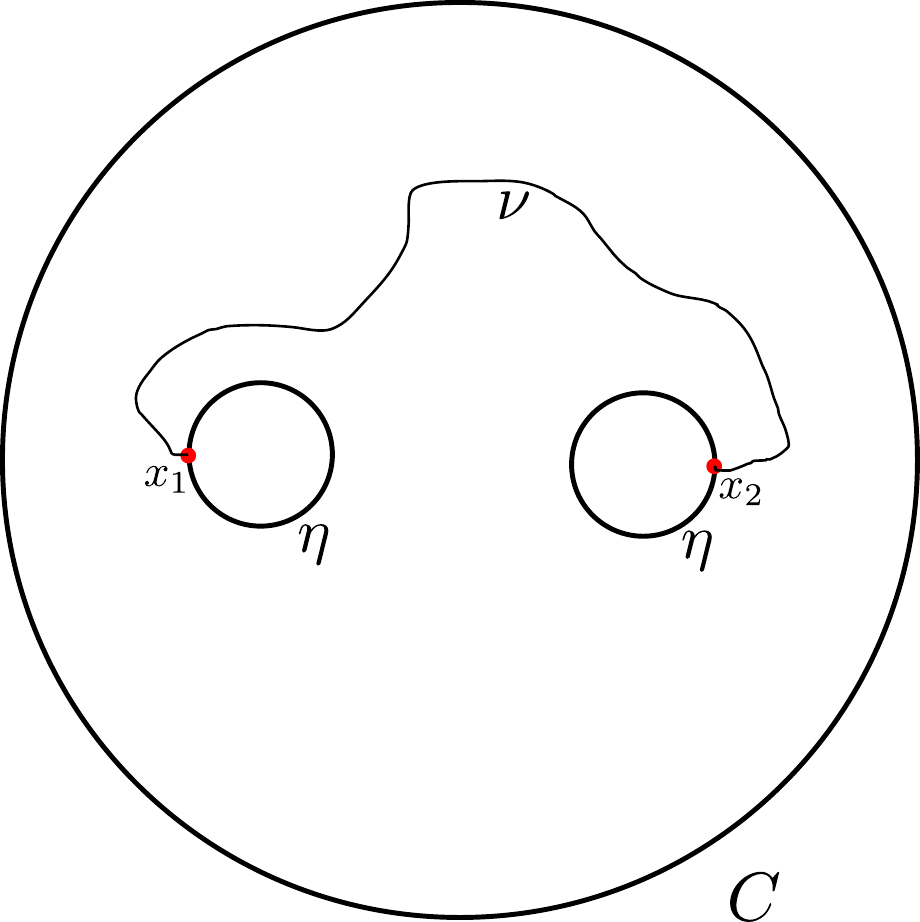}
	\label{fig:bouquet}
\end{figure}

%%%%%%%%%%%%%%%%%%%%%%%%%%%%%%%%%%%%
\subsection{Proof of Proposition~\ref*{prop:points_oriented_surface}}
%%%%%%%%%%%%%%%%%%%%%%%%%%%%%%%%%%%%

To complete the proof of Proposition~\ref*{prop:points_oriented_surface}, we will require a combinatorial result which is proved in the next section.  We will proceed assuming this result.

To begin, let $x$ be a point in the interior of $M$.   Suppose that $q_1, \dots, q_n$ are distinct points on $\partial M = S^1$
which are ordered counterclockwise on $\partial M$. Suppose further that there are unique minimizing geodesics $\alpha_1, \dots, \alpha_n$
which start at $x$ and end at $q_1, \dots, q_n$ (respectively).  We select a bouquet $\{ \beta_1, \beta_2, \dots, \beta_{2G - 1}, \beta_{2G} \}$
with respect to $x$; such a bouquet exists by Lemma~\ref*{lem:bouquet}.

We first observe that $\alpha_i$ and $\alpha_j$ with $i \neq j$
cannot be equal.  This follows from the fact that $q_i \neq q_j$.  Next, $\alpha_i$ and $\alpha_j$ only intersect at $x$.
If there is another point of intersection $y$ of $\alpha_i$ and $\alpha_j$, then since they are not equal this would imply that there
were multiple shortest curves from $x$ to $q_i$ and $q_j$, which is impossible.

Combining these observations with the definition of a bouquet, 
$\{ \alpha_i \}$ do not intersect $\{ \beta_i \}$
or each other (except at $x$).  As such, if we remove $\{ \beta_i \}$ from $M$, we then obtain an annulus with two boundary components.
Let $C_1$ be the component which is equal to $\partial M$, and $C_2$ is the new boundary component formed by removing $\{ \beta_1, \dots, \beta_{2G} \}$.
In this annulus $\mathcal{A}$, $\alpha_1, \dots, \alpha_n$ each starts at one of the $4G$ vertices of the polygon $C_2$ (corresponding
to the initial tangent vector of $\alpha_i$ at $x$), and ends at $q_1, \dots, q_n$ (respectively).  This is shown in Figure~\ref*{fig:annulus_point_setup}.

\begin{figure}
	\caption{The objects from the proof of Proposition~\ref*{prop:points_oriented_surface}}
	\centerline{\includegraphics[width=0.5\textwidth]{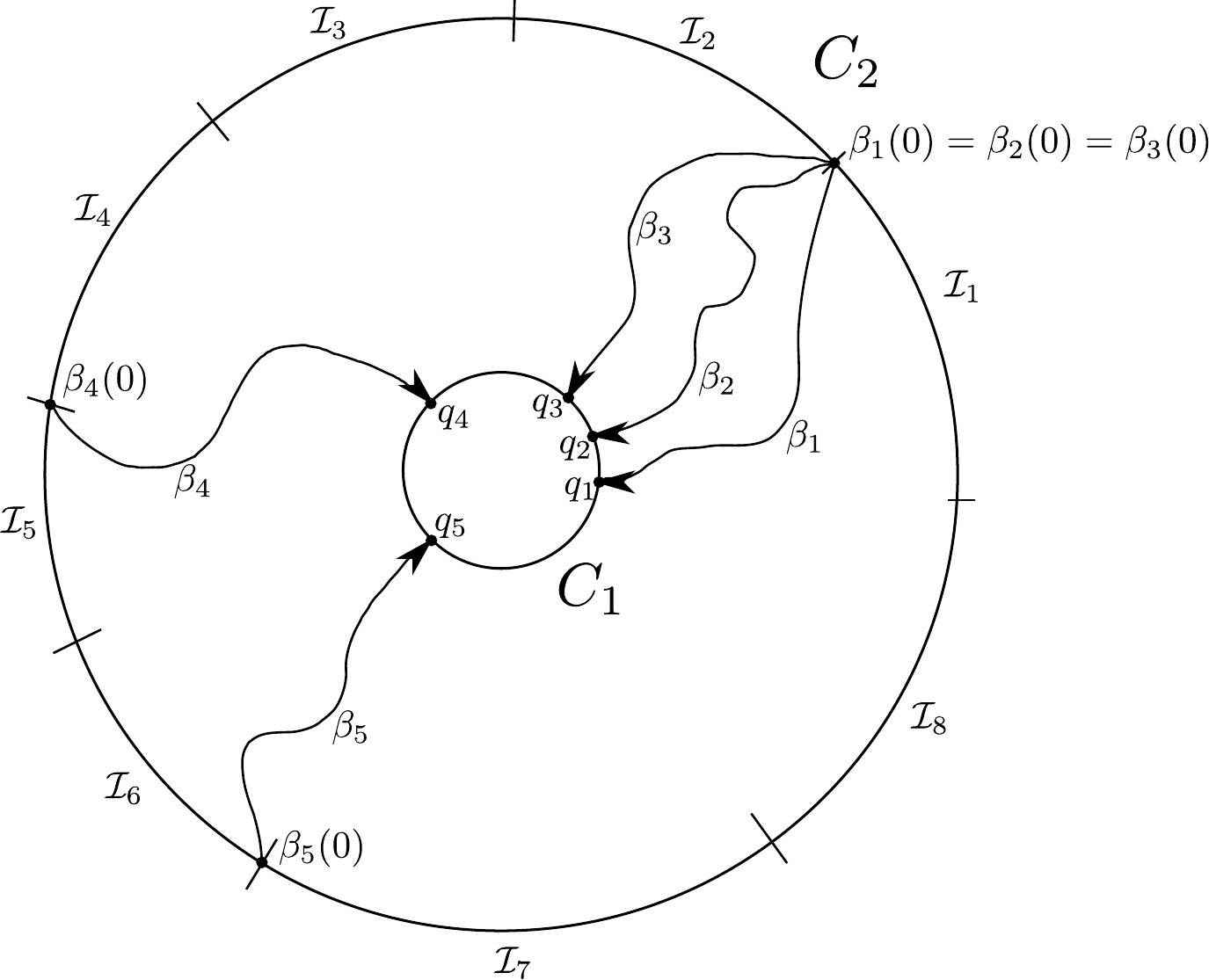}}
	\label{fig:annulus_point_setup}
\end{figure}

Let us be more specific.  The tangent vectors of both ends of each curve in $\{ \beta_i \}$ divide $UT_x$ up into segments;
each segment corresponds to one of the $4G$ vertices of the polygon with geodesic edges formed by cutting $M$ along $\{ \beta_i \}$
and unfolding the result.  Let $v_i$ be the unit vector in
$UT_x$ given by $\alpha_i'(0))$ (where $\alpha_i$ is unit-speed parametrization).  Then $\alpha_i$ starts at the point on the polygon
which corresponds to the interval of $UT_x$ in which $v_i$ lies (after the tangent vectors $\{ \beta_i'(0), -\beta_i'(1) \}$ are
all removed).  Furthermore, the order of all $\{v_i\}$ lying in the same segment corresponds to the order
of the tangent vectors of $\{ \alpha_i \}$ at their mutual starting point.

This setup satisfies the hypotheses of Proposition~\ref*{prop:total_angle_points} (stated and proved in the next section),
and so the total area of all of the triangles $\Delta_{v_1 v_2}, \Delta_{v_2 v_3}, \dots, \Delta_{v_{n-1} v_n}, \Delta_{v_n v_1}$
is at most $\pi$; this completes the proof.

%%%%%%%%%%%%%%%%%%%%%%%%%%%%%%%%%%%%%%%%%%%%%%%%%%%%%%%%%%%%%%%%%%%%%%%%%%%%%%%%%%%%%%%%%%%
%%%%%%%%%%%%%%%%%%%%%%%%%%%%%%%%%%%%%%%%%%%%%%%%%%%%%%%%%%%%%%%%%%%%%%%%%%%%%%%%%%%%%%%%%%%
\section{Combinatorial Results}
%%%%%%%%%%%%%%%%%%%%%%%%%%%%%%%%%%%%%%%%%%%%%%%%%%%%%%%%%%%%%%%%%%%%%%%%%%%%%%%%%%%%%%%%%%%
%%%%%%%%%%%%%%%%%%%%%%%%%%%%%%%%%%%%%%%%%%%%%%%%%%%%%%%%%%%%%%%%%%%%%%%%%%%%%%%%%%%%%%%%%%%

We will begin with the setup for Proposition~\ref*{prop:total_angle_points}, the main proposition
of this section.

Fix a counterclockwise sequence of distinct vectors $v_1, \dots, v_n$ on $S^1$, and fix
a positive integer $G \geq 1$.  Suppose that $\tau_1, \dots, \tau_{4G}$ are segments of $S^1$
whose union is $S^1$, have nonempty and disjoint interiors, and $v_i$ is in the interior of
some $\tau_j$.

Let $\mathcal{A}$ be the annulus $\{ (x,y) : 1 \leq ||(x,y)|| \leq 2 \} \subset \mathbb{R}^2$,
let $C_1$ be the inner boundary with radius $1$, and $C_2$ be the outer boundary with radius $2$
(see Figure~\ref*{fig:annulus_point_setup}).
Suppose that $p_1, \dots, p_{4G}$ are points on $C_2$ oriented counterclockwise.

Lastly, suppose that, for every $\tau_i$, we associate it with one of the points $p_j$ by
a bijective function $f : \{1, \dots, 4G \} \rightarrow \{1, \dots, 4G \}$.  We then consider curves
$\alpha_1, \dots, \alpha_n$ with the following properties:
\begin{enumerate}
	\item	Each $\alpha_i$ is simple.
	\item	Each $\alpha_i$ starts at $C_2$, and ends at $C_1$.
	\item	The initial point of $\alpha_i$ is one of the points $p_j$.  In particular, the initial point of $\alpha_i$ is equal to $p_{f(i)}$.
	\item   For each $p_i$, then curves which start at $p_i$ are exactly
	    $\alpha_{a_i}, \dots, \alpha_{b_i - 1}$ for some $b_i \geq a_i \geq 1$.  Note that this can be empty.
	\item	For every $\alpha_i$ and $\alpha_j$ with $i \neq j$, 
		the endpoints of $\alpha_i$ and $\alpha_j$ are distinct, and $\alpha_i$
		and $\alpha_j$ are disjoint except for possibly their initial points.
	\item   All $\{ \alpha) \}$ are length minimizing geodesics.
\end{enumerate}
For each $i$, let $v_i = \alpha_i'(0)$.

The main result concerns the order of the endpoints of $\{ \alpha_i \}$ as defined above.
We express this in terms of the total area of the triangles formed by points $\{ v_i \}$ as follows:
\begin{prop}
	\label{prop:total_angle_points}
	Suppose that both $f$ and $\{ \alpha_i \}$ are defined as above.
	Let $\{ q_i \}$ be the endpoints of $\{ \alpha_i \}$, respectively.  Then
	$$\sum_{i=1}^n \textrm{Area}(\Delta_{v_i v_{i+1}}) = \sum_{i=1}^n \textrm{Area}(\Delta_{\alpha_i'(0) \alpha_{i+1}'(0)}) \leq \pi (1 + \frac{2G}{\pi}).$$
\end{prop}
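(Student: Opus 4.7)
The plan is to rewrite each oriented-triangle area as $\tfrac{1}{2}\sin\theta_i$ and to split the cyclic sum into intra-block and inter-block contributions, using the block structure supplied by Property~4.

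Let $\theta_i \in (0, 2\pi)$ denote the counterclockwise angle from $v_i$ to $v_{i+1}$ along $S^1$ (indices taken mod $n$). A short case-check against Definition~\ref*{defn:oriented}, based on whether the shorter arc from $v_i$ to $v_{i+1}$ is counterclockwise or clockwise, gives
$$\textrm{Area}(\Delta_{v_i v_{i+1}}) = \tfrac{1}{2}(v_i \times v_{i+1}) = \tfrac{1}{2}\sin\theta_i,$$
so the claim reduces to $\sum_{i=1}^n \sin\theta_i \le 2\pi + 4G$.

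By Property~4 and the bijectivity of $f$, the indices $\{1,\dots,n\}$ partition cyclically into at most $m \le 4G$ maximal blocks, one per used vertex $p_j \in C_2$, with all $v_i$ in a given block lying in the common sector $\tau_{f^{-1}(j)} \subset S^1$. Because $v_1, \dots, v_n$ is a counterclockwise sequence on $S^1$ by hypothesis, within each block the $v_i$'s are in counterclockwise order inside their sector. Call a transition $i \to i+1$ \emph{intra-block} if $i$ and $i+1$ lie in the same block, and \emph{inter-block} otherwise; then there are $n-m$ intra-block transitions and $m$ inter-block ones, the cyclic closure $n \to 1$ being inter-block whenever $m \ge 2$ (and when $m=1$ the bound follows immediately from $\sum_i \theta_i = 2\pi$ together with $\sin\theta \le \theta$).

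Each intra-block $\theta_i$ is a counterclockwise displacement inside one sector, so the sum of intra-block $\theta_i$'s lying in any one sector is at most that sector's angular length. Summing across all blocks,
$$\sum_{\text{intra}} \theta_i \;\le\; \sum_{j=1}^{4G}|\tau_j| \;=\; 2\pi,$$
and $\sin\theta \le \theta$ for $\theta \ge 0$ yields $\sum_{\text{intra}} \sin\theta_i \le 2\pi$. On each of the at most $4G$ inter-block transitions the trivial bound $\sin\theta_i \le 1$ gives $\sum_{\text{inter}} \sin\theta_i \le 4G$. Adding these two estimates,
$$\sum_{i=1}^n \textrm{Area}(\Delta_{v_i v_{i+1}}) \;=\; \tfrac{1}{2}\sum_{i=1}^n \sin\theta_i \;\le\; \tfrac{1}{2}(2\pi + 4G) \;=\; \pi\Bigl(1 + \tfrac{2G}{\pi}\Bigr).$$

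The heart of the argument is producing the cyclic partition with at most $4G$ blocks from Property~4 and the bijection $f$; once that is in hand, the estimate is a one-line combination of $\sin\theta \le \theta$ (on intra-block transitions) with $\sin\theta \le 1$ (on inter-block transitions). The additive $2G$ term is paid for cheaply by the trivial estimate on the $\le 4G$ inter-block jumps, and no obstacle more delicate than careful handling of the cyclic closure and the bookkeeping $m \le 4G$ should arise.
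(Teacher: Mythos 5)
Your proof is correct and follows essentially the same route as the paper: decompose the cyclic sum into the within-block contributions (bounded by half the total sector lengths, i.e.\ $\pi$) plus the at most $4G$ remaining transitions (each bounded by $\tfrac12$, contributing $2G$). Your use of $\operatorname{Area}(\Delta_{v_iv_{i+1}})=\tfrac12\sin\theta_i$ together with $\sin\theta\le\theta$ is just a more explicit justification of the paper's intra-block estimate, and your $\sin\theta_i\le 1$ is exactly Lemma~\ref{lem:max_area}.
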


Before we prove Proposition~\ref*{prop:total_angle_points}, we will need an additional lemma.

\begin{lem}
	\label{lem:max_area}
	Suppose that $a$ and $b$ are points on $S^1$ (with unit radius).  Then
	$$ -\frac{1}{2} \leq \textrm{Area}(\Delta_{a b}) \leq \frac{1}{2}.$$
\end{lem}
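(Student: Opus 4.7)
The plan is to compute the unsigned area of $\Delta_{ab}$ directly from the fact that two of its sides have unit length, and then observe that the oriented area is just this quantity with a sign.

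First, I would handle the degenerate cases. If $a = b$ or $a$ and $b$ are antipodal, then by definition $\Delta_{ab}$ is neutrally oriented with oriented area equal to $0$, which trivially lies in $[-\frac{1}{2}, \frac{1}{2}]$.

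For the remaining case, $a$ and $b$ are distinct and non-antipodal. The triangle $\Delta_{ab}$ has vertices $(0,0)$, $a$, and $b$, so two of its sides are the segments from the origin to $a$ and from the origin to $b$, both of length $1$ since $a,b \in S^1$. Let $\theta \in (0,\pi)$ denote the angle at the origin between these two sides. Using the standard formula for the area of a triangle in terms of two sides and the included angle, the unsigned area equals $\frac{1}{2} \cdot 1 \cdot 1 \cdot \sin\theta = \frac{1}{2}\sin\theta$. Since $\theta \in (0,\pi)$, we have $\sin\theta \in (0,1]$, so the unsigned area lies in $(0, \frac{1}{2}]$.

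Finally, by Definition~\ref*{defn:oriented}, the oriented area $\textrm{Area}(\Delta_{ab})$ is either equal to the unsigned area (in the positively oriented case) or equal to its negative (in the negatively oriented case). In either case we obtain $-\frac{1}{2} \leq \textrm{Area}(\Delta_{ab}) \leq \frac{1}{2}$, which completes the argument. There is no real obstacle here; the lemma is essentially the observation that a triangle with two unit sides has area at most $\frac{1}{2}$, achieved precisely when the two sides are perpendicular.
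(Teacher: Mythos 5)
Your proof is correct and follows essentially the same route as the paper: both reduce to the formula $\frac{1}{2}\sin\theta$ for the area of a triangle with two unit sides and included angle $\theta \in [0,\pi]$, and bound this by $\frac{1}{2}$. Your explicit treatment of the degenerate (neutrally oriented) cases and of the sign convention is a slightly more careful write-up of the same argument.
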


\begin{proof}

	We need only prove the upper bound; the proof of the lower bound is analogous.
	Without loss of generality, we may assume that $b = (1,0)$ and $a$ lies on the
	upper half-circle.  Then the area of $\Delta_{a b}$ is equal to $\frac{\sin \theta}{2}$,
	where $\theta$ is the counterclockwise angle from $b$ to $a$ with $\theta \in [0, \pi]$.
	On this interval, $\frac{\sin \theta}{2}$ is maximized at $\theta = \pi/2$, and attains
	a maximum of $1/2$.

\end{proof}

We now move on to the proof of Proposition~\ref*{prop:total_angle_points}:
\begin{proof}

    Let us first consider each sequence $v_{a_i}, \dots, v_{b_i - 1}$ (with $v_j = \alpha_j'(0)$).  Since the curves are disjoint and are
    geodesics, the $\{ v_j \}$ are all distinct, and $v_{a_i}, \dots, v_{b_i - 1}$ are either all counterclockwise or all clockwise.
    Thus,
        $$\sum_{j = a_i}^{b_i-1} \textrm{Area}(\Delta v_j v_{j+1}) \leq \frac{\textrm{Length}(\tau_i)}{2},$$
    since all $v_{a_i}, \dots, v_{b_i - 1}$ lie in an interval of $S^1$ of length equal to that of $\tau_i$ (using the notation defined above).
    
    From this, we can conclude that 
        $$ \sum_{k = 1}^{4G} \sum_{j = a_k}^{b_k - 1} \textrm{Area}(\Delta v_j v_{j+1}) \leq 1/2 \sum_{k=1}^{4G} \textrm{Length}(\tau_k) = \pi.$$
    
    The series $$\sum_{i=1}^n \textrm{Area}(\Delta_{v_i v_{i+1}})$$ has
    exactly $4G$ more terms than $$\sum_{k = 1}^{4G} \sum_{j = a_k}^{b_k - 1} \textrm{Area}(\Delta v_j v_{j+1}).$$
    
    Each one of these terms, from Lemma~\ref*{lem:max_area}, is at most
    1/2, and so we get the bound
        $$\sum_{i=1}^n \textrm{Area}(\Delta_{v_i v_{i+1}}) \geq \pi + \frac{4 G}{2} = \pi(1 + \frac{2G}{\pi}),$$
    completing the proof.
\end{proof}

\bibliographystyle{amsplain}
\bibliography{bibliography}

\end{document}